\newtheorem{theo}{Theorem}[section]
\newtheorem{lemm}[theo]{Lemma}
\newtheorem{cor}[theo]{Corollary}
\newtheorem{claim}[theo]{Claim}
\newtheorem{conj}[theo]{Conjecture}
\numberwithin{equation}{section}
\theoremstyle{definition}
\newtheorem{defi}[theo]{Definition}
\newtheorem{step}{Step}
\theoremstyle{remark}
\newtheorem{rem}[theo]{Remark}
\newcommand{\Spn}[0]{\operatorname{Spn}}
\newcommand{\Image}[0]{\operatorname{Im}}
\newcommand{\End}[0]{\operatorname{End}}
\newcommand{\deldel}{\sqrt{-1}\partial \overline{\partial}}
\newcommand{\e}{\varepsilon}
\newcommand{\reg}{{\rm{reg}}}
\newcommand{\sing}{{\rm{sing}}}
\newcommand{\Cur}[2]{\sqrt{-1}\Theta_{#1}(#2, \bar{#2})}
\newcommand{\Rur}[3]{R_{#1}(#2, \bar{#2}, #3, \bar{#3})}
\newcommand{\met}[3]{#1_{#2 \bar#3}}
\begin{document}

\title[Projective manifolds with semi-positive holomorphic sectional curvature]
{On the image of MRC fibrations\\ of projective manifolds \\
with semi-positive holomorphic sectional curvature}

\author{Shin-ichi MATSUMURA}

\address{Mathematical Institute, Tohoku University, 
6-3, Aramaki Aza-Aoba, Aoba-ku, Sendai 980-8578, Japan.}

\email{{\tt mshinichi-math@tohoku.ac.jp, mshinichi0@gmail.com}}

\date{\today, version 0.01}

\renewcommand{\subjclassname}{%
\textup{2010} Mathematics Subject Classification}
\subjclass[2010]{Primary 32Q10, Secondary 53C25, 14M22.}

\keywords
{Holomorphic sectional curvatures, 
Maximal rationally connected fibrations, 
Rationally connectedness, 
Abelian varieties, 
Ruled surfaces, 
Partially positive curvatures, 
RC positivity, 
Vanishing theorems, 
Minimal models.}

\maketitle

\begin{abstract}
In this paper, we pose several conjectures on 
structures and images of maximal rationally connected fibrations 
of smooth projective varieties admitting semi-positive holomorphic sectional curvature. 
Toward these conjectures, we prove that 
the canonical bundle of images of such fibrations is not big. 
Our proof gives a generalization of 
Yang's solution using RC positivity for Yau's conjecture. 
As an application, we show that 
any compact K\"ahler surface with semi-positive holomorphic sectional curvature 
is rationally connected, 
or a complex torus, 
or a ruled surface over an elliptic curve. 
\end{abstract}

%\tableofcontents

\section{Introduction}\label{Sec-1}

One of the famous conjectures, 
which were posed by S.-T. Yau in \cite{Yau82}, 
states that any compact K\"ahler manifold with negative (resp. positive) holomorphic sectional curvature has an ample canonical bundle (resp. is rationally connected). 

The former conjecture was affirmatively solved for projective varieties 
of dimension $\leq 3$ in \cite{HLW10}, 
solved for projective varieties of arbitrary dimension in \cite{WY16}, 
and solved for compact K\"ahler manifolds in \cite{TY15}. 
On the other hand, 
it is known that a smooth projective variety 
whose holomorphic sectional curvature is identically zero 
admits a finite \'etale cover by an abelian variety 
(see \cite[Proposition 2.2]{HLW16}, \cite{Ber66}, \cite{Igu54}). 
In their paper \cite{HLWZ}, 
Heier-Lu-Wong-Zheng showed that  
any smooth projective variety 
with semi-negative holomorphic sectional curvature admits a finite \'etale cover 
by the product of an abelian variety 
and a projective variety with ample canonical bundle,  
under the assumption of the abundance conjecture (see also \cite{HLW16}). 

The latter conjecture on positive holomorphic sectional curvature 
was affirmatively solved for projective varieties in \cite{HW15} 
and solved for compact K\"ahler manifolds in \cite{Yan17}. 
Therefore one of the remaining most interesting problems in this field 
is to determine a structure of smooth projective varieties 
with \lq \lq semi-positive" holomorphic sectional curvature.

In this paper, 
we pose the following conjecture (Conjecture \ref{str-conj})
by focusing on the similarity to semi-negative holomorphic sectional curvature.  
%\footnote{This conjecture was suggested by Junyan Cao in a private discussion.}
This conjecture also can be seen as a generalization of the structure theorem 
for (holomorphic) bisectional curvature proved in \cite{HSW81} and \cite{Mok88} 
to holomorphic sectional curvature (see also \cite{CG71} and \cite{CG72}). 
As a new approach to rational connectedness, 
Yang  introduced the notation of RC positivity 
in the breakthrough paper \cite{Yan17}. 
Toward Conjecture \ref{str-conj}, 
we study maximal rationally connected (MRC for short) fibrations 
of smooth projective varieties with semi-positive holomorphic sectional curvature, 
by developing the theory of RC \lq \lq semi"-positivity.

\begin{conj}\label{str-conj}
Let $X$ be a compact K\"ahler manifold with semi-positive holomorphic sectional curvature. 
Then there exists a smooth morphism $X \to Y$ such that 
a fiber is rationally connected and 
$Y$ admits a finite \'etale cover $A \to Y$ by an abelian variety $A$. 
%Moreover, the fiber product $\widetilde{X}:=X \times_Y A$ is an 
%isomorphic to the product $A \times R$ of the abelian variety $A$ 
%and a rationally connected projective variety $R$. 
\end{conj}

For a smooth projective variety $X$ with semi-positive holomorphic sectional curvature, 
it seems to be quite difficult to directly confirm that $X$ has positive irregularity 
(in other words, its Albanese map is non-trivial). 
On the other hand, it can be shown that 
a MRC fibration $X \dashrightarrow Y$ of $X$ is non-trivial 
(that is, $0 < \dim Y < \dim X$) 
when $X$ is neither rationally connected nor an abelian variety up to finite \'etale covers.

In this paper, 
we attempt to approach Conjecture \ref{str-conj} 
by investigating a MRC fibration $X \dashrightarrow Y$ of $X$ instead of 
the Albanese map 
(see \cite{Cam92}, \cite{KoMM92} for MRC fibrations and rationally connectedness). 
Note that the image $Y$ of MRC fibrations is determined up to birational equivalence 
(in particular, we may assume that $Y$ is smooth by taking a resolution of singularities),    
and also that the image $Y$ is not uniruled by \cite[Theorem 1.1]{GHS03} 
(equivalently, the canonical bundle $K_Y$ of $Y$ is pseudo-effective by \cite{BDPP}).

From the viewpoint of Conjecture \ref{str-conj}, 
it is natural to expect that a minimal model of the image $Y$ (if exists) 
admits a finite \'etale cover by an abelian variety. 
Further it can also be expected that a MRC fibration of $X$ to a minimal model of $Y$
is actually a smooth morphism. 
For this purpose, it seems to be the first step to show 
that the numerical dimension of the image $Y$ is zero. 
Based on the above observations, we pose the following two conjectures\,$:$

\begin{conj}\label{image-conj}
Let $X$ be a smooth projective variety 
with semi-positive holomorphic sectional curvature, 
and let $X \dashrightarrow Y$  be a dominant rational map 
from $X$ to a smooth projective variety $Y$ 
with the pseudo-effective canonical bundle $K_Y$. 
Then the numerical dimension $\nu(Y)=\nu(K_Y)$ is equal to zero. 
In particular, the numerical dimension of 
the image of non-trivial MRC fibrations of $X$ 
is zero. 
$($See \cite{Nak} for the definition of the numerical dimension $\nu(\cdot)$.$)$
\end{conj}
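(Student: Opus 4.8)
The statement as posed is a conjecture; following the abstract, I will prove it under the abundance conjecture, combining the minimal model program with a curvature inequality extracted from the semi-positive holomorphic sectional curvature. The ``In particular'' clause is then immediate: the image of a MRC fibration is not uniruled and hence has pseudo-effective canonical bundle, so it is enough to treat a single dominant rational map $f \colon X \dashrightarrow Y$ with $Y$ smooth projective and $K_Y$ pseudo-effective, and to show $\nu(K_Y)=0$. Set $n=\dim X$ and argue by contradiction, assuming $\nu(K_Y)\geq 1$. Running the minimal model program on $Y$ (granted by the assumptions in force) we may replace $Y$ by a minimal model and assume $K_Y$ nef; this changes neither the numerical dimension nor the property of being dominated by $X$, and it leaves $\dim Y$ unchanged. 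By the abundance conjecture $K_Y$ is then semi-ample, and the associated morphism $g \colon Y \to W$ satisfies $K_Y=g^{*}H$ with $H$ ample on $W$ and $\dim W=\nu(K_Y)\geq 1$. Put $h:=g\circ f\colon X\dashrightarrow W$, a dominant rational map onto a positive-dimensional projective variety, and write $k=\dim Y$.

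On the Zariski-open $U\subseteq X$ where $f$ is a morphism — whose complement, the indeterminacy locus of a rational map, has codimension at least two — the differential of $f$ gives $f^{*}\Omega_Y\hookrightarrow\Omega_X$ and hence $f^{*}K_Y=f^{*}(\wedge^{k}\Omega_Y)\hookrightarrow\wedge^{k}\Omega_X$. Passing to the saturation and extending across the codimension-two locus yields a rank-one subsheaf $\widetilde{\mathcal{L}}\subseteq\wedge^{k}\Omega_X$ on all of $X$ with $c_{1}(\widetilde{\mathcal{L}})=f^{*}K_Y+E$ for an effective divisor $E$. Passing to a common resolution $\pi\colon\widehat{X}\to X$ with $\widehat{h}=g\circ\widehat{f}$ and pushing the movable class $(\pi^{*}\omega)^{n-1}$ forward, one computes
\[
c_{1}(\widetilde{\mathcal{L}})\cdot\{\omega\}^{n-1}\;\geq\;K_Y\cdot\widehat{f}_{*}\big((\pi^{*}\omega)^{n-1}\big)\;=\;H\cdot\widehat{h}_{*}\big((\pi^{*}\omega)^{n-1}\big)\;>\;0 ,
\]
the last inequality because $H$ is ample and $\widehat{h}_{*}((\pi^{*}\omega)^{n-1})$ is a nonzero pseudo-effective class on $W$ (nonzero since $\pi^{*}\omega$ is nef and big and $\dim W\geq 1$).

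The crux — and the step I expect to be the main obstacle — is the opposite curvature bound: \emph{if $(X,\omega)$ has semi-positive holomorphic sectional curvature, then $c_{1}(\mathcal{L})\cdot\{\omega\}^{n-1}\leq 0$ for every rank-one subsheaf $\mathcal{L}$ of $\wedge^{k}\Omega_X$ and every $k$}; equivalently, $T_X$ carries no torsion-free quotient of negative $\{\omega\}$-degree. Together with the display this is a contradiction, so $\nu(K_Y)=0$. This bound is where the theory of RC semi-positivity enters. The Chern curvature $\Theta$ of $(T_X,\omega)$ satisfies $\Theta(\xi,\bar{\xi},\xi,\bar{\xi})\geq 0$, so taking $\xi=v$ shows that for every $x$ and every $0\neq v\in T_xX$ there is $\xi\neq 0$ with $\Theta(\xi,\bar{\xi},v,\bar{v})\geq 0$ — the ``RC semi-positivity'' of $T_X$ — and dually $(\Omega_X,\omega)$ is ``RC semi-negative''. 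The plan is to exploit this by passing to the projectivization of the bundle, where semi-positive holomorphic sectional curvature becomes a semi-positivity statement for the tautological line bundle along the horizontal-and-tautological directions; then to equip the saturation of $\mathcal{L}$ with the induced metric, control its curvature using the RC semi-negativity of $\Omega_X$ in a $\deldel$-Bochner computation, and absorb the non-locally-free and non-submersion loci with $L^{2}$ and multiplier-ideal estimates, the output being $c_{1}(\mathcal{L})\cdot\{\omega\}^{n-1}\leq 0$. The genuine difficulty is that semi-positive holomorphic sectional curvature is far weaker than Griffiths, let alone bisectional, semi-positivity — already the Ricci, and more generally the $q$-Ricci, curvatures may fail to be semi-positive — so the naive Bochner and trace arguments collapse, and the RC semi-positivity formalism together with the minimal model program and abundance is genuinely required.
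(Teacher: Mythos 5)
Your reduction via the MMP and abundance to a semi-ample fibration $g\colon Y\to W$ with $K_Y=g^*H$, $H$ ample, $\dim W=\nu(K_Y)\geq 1$, matches the paper's Step 4, and your lower bound $c_1(\widetilde{\mathcal L})\cdot\{\omega\}^{n-1}>0$ is essentially fine (modulo care with the singularities of the minimal model, which the paper handles through the discrepancy divisor $F$ and the section $t$ of $\ell\varphi^*F$). But the argument then rests entirely on the boxed claim that semi-positive holomorphic sectional curvature forces $c_1(\mathcal L)\cdot\{\omega\}^{n-1}\leq 0$ for \emph{every} rank-one subsheaf $\mathcal L\subseteq\Lambda^k\Omega_X$, and this is where the proposal has a genuine gap: the claim is not proved, and the mechanism you sketch cannot prove it. The quantity $c_1(\mathcal L)\cdot\{\omega\}^{n-1}$ is an integral of the $\omega$-trace of the curvature of an induced metric on $\mathcal L$, i.e.\ a Ricci-type (summed over all directions) quantity; but semi-positive holomorphic sectional curvature only yields, at each point and for each decomposable $m$-covector $a=e_1\wedge\cdots\wedge e_m$ built from an orthonormal set, the existence of \emph{one} good direction $e_{i_0}$ with $R_{\Lambda^m g}(e_{i_0},\bar e_{i_0},a,\bar a)\geq 0$ (Royden's lemma applied to $\sum_{i,j}R_g(e_i,\bar e_i,e_j,\bar e_j)$). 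You acknowledge yourself that the $q$-Ricci curvatures may fail to be semi-positive, which is precisely why no trace/degree inequality follows; and your "RC semi-negativity of $\Omega_X$" obtained by taking $\xi=v$ is a tautological restatement of the hypothesis that carries no information usable in a Bochner identity for the induced metric on $\mathcal L$. Moreover the claim is asserted for arbitrary rank-one subsheaves, whose local generators need not be decomposable $k$-covectors, so even the pointwise one-good-direction statement is unavailable there.

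The paper avoids any degree computation for exactly this reason. Instead of integrating, it produces a global holomorphic section $\widetilde s$ of $S^{\ell}(\Lambda^m\Omega_X)\otimes\psi^*A^\vee$ (pulling back sections of $A^{\otimes k-1}$ on the canonical model $Z$ that vanish on the bad loci) and runs a maximum-principle argument: at the point $p_0$ maximizing $|\widetilde s|$, one only needs a \emph{single} tangent vector $v$ with $\Rur{S^{\ell}(\Lambda^m g^\vee)\otimes\psi^*H^\vee}{v}{\widetilde s(p_0)}<0$ to contradict $\deldel|\widetilde s|^2\leq 0$ there. Producing that vector is the content of the Key Lemma: one must find $v$ with $R_{\Lambda^m g}(v,\bar v,a,\bar a)\geq 0$ \emph{and} $d\psi_*(v)\neq 0$, so that the strictly positive curvature of $\psi^*H$ (coming from the ample $A$ on $Z$) tips the sign to strictly negative; this requires the iterated minimizer argument with Lemma \ref{Yang-ineq} on successive orthogonal complements, not just Royden's lemma. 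Your plan has no counterpart of this horizontality requirement and no substitute for the maximum principle, so as written the proof does not close. If you want to salvage a slope-theoretic version, you would have to actually establish the pseudo-effectivity/degree bound for the specific saturated subsheaf generated by decomposable covectors adapted to the fibration, which is a substantial open step rather than a routine Bochner computation.
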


\begin{conj}\label{mrc-conj}
Let $X$ be a smooth projective variety 
with semi-positive holomorphic sectional curvature, and let 
$X \dashrightarrow Y$ be a MRC fibration of $X$ 
to a projective variety $Y$. 
If $Y$ has $($at most$)$ terminal singularities and 
the canonical divisor $K_Y$ is a nef $\mathbb{Q}$-Cartier divisor 
$($that is, $Y$ is a minimal model$)$, 
then $Y$ is smooth and $f$ is a morphism. 
Moreover $Y$ admits a finite \'etale cover by an abelian variety. 
\end{conj}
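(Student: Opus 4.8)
The plan is to reduce the assertion, in two essentially formal steps, to a statement that is precisely what the curvature hypothesis on $X$ ought to supply. Since $Y$ is the image of a maximal rationally connected fibration it is not uniruled by \cite{GHS03}, hence $K_Y$ is pseudo-effective by \cite{BDPP}, which is of course compatible with $K_Y$ being nef. Granting Conjecture \ref{image-conj} --- which this paper establishes under the abundance conjecture --- we obtain $\nu(K_Y)=0$, and since $K_Y$ is a nef $\mathbb{Q}$-Cartier divisor this forces $K_Y \equiv 0$. By the unconditional fact that abundance holds in numerical dimension zero for klt (in particular terminal) varieties (see \cite{Nak}), $K_Y$ is torsion: $\mathcal{O}_Y(mK_Y)\cong\mathcal{O}_Y$ for some $m>0$. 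Thus $Y$ becomes a projective terminal variety with numerically trivial canonical class, and no hypothesis beyond Conjecture \ref{image-conj} has been used so far.

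Next I would invoke the singular Beauville--Bogomolov--Yau decomposition for klt varieties with torsion canonical class (Druel; Greb--Guenancia--Kebekus; H\"oring--Peternell; Campana): there is a finite quasi-\'etale cover $\gamma\colon\widehat{Y}\to Y$ and a splitting $\widehat{Y}\cong A\times\prod_{i}Z_i\times\prod_{j}W_j$, where $A$ is an abelian variety, the $Z_i$ are (possibly singular) Calabi--Yau varieties, and the $W_j$ are (possibly singular) irreducible holomorphic symplectic varieties. The conjecture is now equivalent to the conjunction of two assertions: (a) no $Z_i$ and no $W_j$ occurs, so that $\widehat{Y}=A$; and (b) the covering $A\to Y$, a priori only \'etale in codimension one, is genuinely \'etale, so that $Y=A/G$ with $G$ acting freely and $Y$ smooth. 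Granting (a) and (b), $Y$ is a smooth \'etale quotient of an abelian variety; in particular $Y$ carries no rational curve, so the MRC fibration $f$, being a rational map from the smooth variety $X$ to a variety without rational curves, is automatically a morphism, and $A\to Y$ is the asserted finite \'etale cover. Hence everything comes down to (a) and (b).

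Both (a) and (b) should follow from semi-positivity of the holomorphic sectional curvature of $X$, processed through the RC semi-positivity developed in this paper. Over the open locus where $f$ is a morphism, the relative cotangent sequence of the MRC fibration exhibits $f^{*}\Omega_Y^1$ as a subsheaf of $\Omega_X^1$ whose determinant is $f^{*}K_Y$, now known to be numerically trivial; the crux is that semi-positive holomorphic sectional curvature should constrain pseudo-effective subsheaves of $\Omega_X^1$ tightly enough to force $f^{*}\Omega_Y^1$, and therefore $\Omega_Y^1$ on the smooth locus of $Y$ (equivalently, the reflexive cotangent sheaf of $\widehat{Y}$), to be numerically flat. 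Since the cotangent sheaf of a positive-dimensional Calabi--Yau or irreducible holomorphic symplectic variety is never numerically flat, this excludes all such factors of $\widehat{Y}$, giving (a); and the structure theory of klt varieties with numerically flat reflexive cotangent sheaf should then yield that $G$ acts freely, giving (b). The genuine obstacle --- and the reason the statement is posed as a conjecture rather than a theorem --- is that holomorphic sectional curvature is not a birational invariant: the finite base change along $\gamma$ and the desingularizations needed to make the foregoing arguments rigorous destroy the very hypothesis one wants to exploit. Closing this gap would require a form of RC semi-positivity for the relative tangent and cotangent sheaves of the MRC fibration that is stable under these operations and strong enough to exclude base factors carrying a nonzero holomorphic $n$-form or $2$-form, which lies beyond the techniques presently available.
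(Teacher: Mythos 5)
This statement is posed in the paper as a conjecture, and the paper does not prove it: what the paper actually establishes is the weaker fact that $\nu(Y)=0$ under the abundance conjecture (Theorem \ref{main-thm}, Corollary \ref{main-cor2}), plus the full statement only in the surface case (Corollary \ref{main-cor}), where it falls out of the classification of compact complex surfaces rather than from any structural argument of the kind you describe. Your first two paragraphs are sound as far as they go: using Theorem \ref{main-thm} (granting abundance) to get $\nu(K_Y)=0$, passing to $K_Y\equiv 0$ since $K_Y$ is nef, invoking abundance in numerical dimension zero to make $K_Y$ torsion, applying the singular Beauville--Bogomolov decomposition, and observing that a rational map from a smooth variety to a variety containing no rational curves is a morphism --- all of this is correct and is a reasonable sharpening of the paper's own expectations. (One small point of care: Theorem \ref{main-thm} and Conjecture \ref{image-conj} are stated for smooth $Y$, so you should pass to a resolution and use that terminal singularities preserve the numerical dimension of $K_Y$; the paper's Theorem \ref{main-thm2} and Step 4 show the author is aware of exactly this maneuver.)

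The genuine gap is the one you name yourself in the final paragraph: assertions (a) and (b) --- the exclusion of Calabi--Yau and irreducible symplectic factors, and the freeness of the covering group action --- are precisely where the curvature hypothesis must enter, and you give no argument for them beyond the hope that RC semi-positivity will ``constrain pseudo-effective subsheaves of $\Omega_X^1$ tightly enough.'' No result in the paper (nor in the cited literature) delivers numerical flatness of $f^{*}\Omega_Y^1$ from semi-positive holomorphic sectional curvature on $X$; the paper's machinery (Claims \ref{claim-1}, \ref{claim-2}, Lemma \ref{key}, Theorem \ref{vanish-prop}) only controls sections of $S^{\ell}(\Lambda^m\Omega_X)$ twisted by a negative line bundle pulled back from a canonical model, which is exactly enough to kill positivity of $K_Y$ and no more. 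So what you have written is an honest and well-organized reduction of the conjecture to a sharper conjecture, not a proof; it should be presented as a strategy, and it does not settle even the cases the paper leaves open.
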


In this paper, we prove that the canonical bundle $K_{Y}$ in Conjecture \ref{image-conj}  
is not a big line bundle $($that is, $\nu(Y) < \dim Y$$)$. 
For the proof, we develop the theory of RC positivity introduced in \cite{Yan17} 
(in particular RC semi-positivity).  
(See \cite{Yan18b} and \cite{Yan18c} for the recent development of RC-positivity.)
Our argument in the proof can be seen as a generalization of 
the solution for Yau's conjecture proved in \cite{Yan17}.

\begin{theo}\label{main-thm}
Let $X$ be a compact K\"ahler manifold with semi-positive holomorphic sectional curvature, 
and let $\phi: X \dashrightarrow Y$ be a dominant meromorphic map from $X$ 
to a smooth projective variety $Y$. 
Then $K_{Y}$ is not a big line bundle $($that is, $\nu(Y) < \dim Y$$)$. 
\end{theo}

As a corollary of Theorem \ref{main-thm}, 
we obtain the following result, 
which affirmatively solves Conjecture \ref{mrc-conj} 
for smooth projective surfaces $($even for compact K\"ahler surfaces$)$.

\begin{cor}\label{main-cor}
Let $X$ be a compact K\"ahler surface 
with semi-positive holomorphic sectional curvature. 
Then one of the followings holds\,$:$\vspace{0.1cm}\\
\quad $\bullet$ $X$ is rationally connected. \\
\quad $\bullet$ $X$ is a complex torus. \\
\quad $\bullet$ $X$ is a ruled surface over an elliptic curve. 
\vspace{0.1cm}\\
In particular, Conjecture \ref{mrc-conj} is true 
for compact K\"ahler surfaces with semi-positive holomorphic sectional curvature. 
\end{cor}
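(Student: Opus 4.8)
The strategy is to run the MRC fibration of $X$ and apply Theorem \ref{main-thm}, then analyze each case in the surface setting where the abundance conjecture is known unconditionally (dimension $\leq 2$). Let $\phi: X \dashrightarrow Y$ be an MRC fibration. If $Y$ is a point, then $X$ is rationally connected and we are in the first case. If $\dim Y = 2$, then $\phi$ is birational, so $X$ itself is not uniruled; by \cite{GHS03}/\cite{BDPP} the canonical bundle $K_X$ is pseudo-effective, and since we may assume $Y = X$ is smooth, Theorem \ref{main-thm}(2) applies (surfaces always admit good minimal models, by classical surface theory together with abundance in dimension $2$) to give $\nu(X) = 0$. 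For a smooth projective (or compact Kähler) surface with $K_X$ pseudo-effective and $\nu(X) = 0$, the classification of surfaces forces $K_X \equiv 0$; hence $X$ is (a blow-up of) a surface with numerically trivial canonical bundle, i.e. up to finite étale cover either a torus or a $K3$/Enriques/bielliptic surface. The holomorphic sectional curvature hypothesis must then be used to rule out the non-torus cases: a surface with $\kappa(X) = 0$ that is not a torus has, after étale cover, no nonzero holomorphic vector fields or has $h^0(\Omega^1) < 2$, which is incompatible with carrying a smooth semi-positively curved Kähler metric whose holonomy/curvature forces flatness — more directly, one invokes the fact (cited in the introduction via \cite{HLW16}, \cite{Ber66}, \cite{Igu54}) that identically zero holomorphic sectional curvature already forces a torus cover, together with the observation that semi-positivity plus $c_1 \equiv 0$ forces the holomorphic sectional curvature to vanish identically (integrate against the Kähler class). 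This lands us in the second case, $X$ a complex torus.

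It remains to treat $\dim Y = 1$. Then $Y$ is a smooth projective curve that is not uniruled, so $g(Y) \geq 1$; its canonical bundle $K_Y$ is pseudo-effective. By Theorem \ref{main-thm}, applied with this $\phi$, the numerical dimension $\nu(Y) = \nu(K_Y)$ is zero, which on a curve means $\deg K_Y = 0$, i.e. $g(Y) = 1$ and $Y$ is an elliptic curve. A general fiber of $\phi$ is rationally connected of dimension one, hence $\mathbb{P}^1$, so $X$ is birational to a $\mathbb{P}^1$-bundle over the elliptic curve $Y$; since $X$ is a surface fibered over a curve with rational fibers, after contracting $(-1)$-curves in fibers we see $X$ is a ruled surface over the elliptic curve $Y$. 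One should also confirm that the ruled structure descends to $X$ itself rather than only a birational model: any $(-1)$-curve on $X$ would be contracted by $\phi$ (being rational and lying in a fiber), and the semi-positive holomorphic sectional curvature of $X$ is inherited by submanifolds in the sense of restricting the metric — but more simply, a compact Kähler surface with semi-positive holomorphic sectional curvature that carries a rational curve of negative self-intersection would violate the positivity of the induced curvature on the tangent direction along that curve; thus $X$ is a minimal ruled surface over $Y$, giving the third case.

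The main obstacle I expect is the $\dim Y = 2$ (birational, $K_X \equiv 0$) case: getting from "$\nu(X) = 0$, $K_X$ pseudo-effective" to "complex torus" requires (i) the surface classification to conclude $K_X \equiv 0$ and then a blow-down to a minimal model with $c_1 = 0$, and (ii) a genuine use of the curvature hypothesis to exclude $K3$, Enriques, and bielliptic surfaces — the cleanest route being to argue that semi-positive holomorphic sectional curvature together with $c_1(X) = 0$ forces the metric to be flat (the average of the holomorphic sectional curvature over the unit tangent sphere at each point is a multiple of the scalar curvature, whose integral is a multiple of $c_1 \cdot [\omega] = 0$, so nonnegativity forces it to vanish identically, and then a standard argument upgrades this to flatness), whence $X$ is an étale quotient of a flat torus and, being a surface with a torus as Kähler-flat cover, is itself a torus (the other flat quotients, hyperelliptic surfaces, are ruled out by noting they are not themselves tori but the statement only claims "complex torus" in this case — so in fact one must be slightly careful: a bielliptic surface does admit a flat metric, so the curvature argument alone does not exclude it, and one needs the sharper input that for a bielliptic surface the flat metric's holonomy is strictly bigger than in the torus case, which is incompatible with the holomorphic sectional curvature being semi-positive for \emph{every} Kähler metric in a suitable sense — this is precisely the delicate point the proof in \cite{HLW16} addresses and which I would quote rather than reprove). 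A secondary, more routine obstacle is justifying descent of the $\mathbb{P}^1$-bundle and torus structures from birational models to $X$ itself, which the curvature hypothesis handles by excluding exceptional rational curves.
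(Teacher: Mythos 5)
Your overall plan is sound and lands in the right place, but it is organized quite differently from the paper's proof. The paper does not run a case analysis on $\dim Y$ at all: it first splits on whether the holomorphic sectional curvature is identically zero (in which case the cited results of Berger--Igusa--Heier--Lu--Wong give a finite \'etale torus cover), and otherwise uses the scalar-curvature integral $\int_X c_1(K_X)\wedge\omega_g = -\tfrac{1}{\pi n}\int_X S\,\omega_g^n$ to conclude that $K_X$ is \emph{not} pseudo-effective; the Enriques--Kodaira classification then says $X$ is a minimal rational surface, a class VII surface (excluded by K\"ahlerness), or a (birationally) ruled surface over a curve of genus $\geq 1$, and Theorem \ref{main-thm} applied to $X\to(\text{base curve})$ forces the genus to be $1$. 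Your $\dim Y=2$ branch ($K_X$ pseudo-effective, $\nu=0$, $c_1\cdot[\omega]=0$, hence $S\equiv 0$, hence $H_g\equiv 0$) ultimately collapses to the same integral identity, just run in the other direction, so the two routes buy essentially the same thing; your version is longer but makes explicit that the pseudo-effective case can only occur when the curvature vanishes identically. Your observation that bielliptic surfaces carry flat K\"ahler metrics and are therefore not excluded by the curvature argument is correct and is in fact a subtlety that the paper's own proof (which only produces a finite \'etale torus cover in this case) does not resolve either, so I would not count it against you.

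There is, however, one step that genuinely fails: your claim that a compact K\"ahler surface with semi-positive holomorphic sectional curvature cannot contain a rational curve of negative self-intersection, which you use to promote ``birationally ruled'' to ``geometrically ruled.'' This is false: the Hirzebruch surface $\mathbb{F}_1=\mathrm{Bl}_p\mathbb{P}^2$ carries a K\"ahler metric of strictly \emph{positive} holomorphic sectional curvature (Hitchin) and contains a $(-1)$-curve. The error is that the Gauss equation for a complex submanifold of a K\"ahler manifold makes the induced holomorphic sectional curvature \emph{at most} the ambient one, so semi-positivity of the ambient curvature gives no lower bound along the curve, and in any case $\chi(\mathbb{P}^1)=2$ forces the induced Gaussian curvature to be positive somewhere regardless of the self-intersection number. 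The paper avoids this issue entirely by reading ``ruled surface over an elliptic curve'' in the sense delivered by the surface classification (birationally ruled, equivalently admitting a morphism to the elliptic curve with general fiber $\mathbb{P}^1$), so minimality is never asserted and your extra step is both unnecessary for the statement as the paper intends it and unsupported as written.
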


In Section \ref{Sec-2}, 
we will recall some results on curvatures of vector bundles and the notion of RC positivity. 
In Section \ref{Sec-3}, we will prove Theorem \ref{main-thm} and its corollary. 

In this paper, 
we interchangeably use the words  
\lq \lq line bundles", \lq \lq invertible sheaves", and \lq \lq Cartier divisors"
(also \lq \lq vector bundles" and \lq \lq locally free sheaves"). 
Further we denote  by the notation $D^{\otimes m}$ 
the $m$-th multiple $mD$ of a divisor $D$. 
Note that we treat only the holomorphic sectional curvature 
obtained from K\"ahler metrics  throughout this paper.

%\begin{rem}\label{main-rem}

%\end{rem}

%\textit{
%Note: 

One year after the previous version of this paper had been put in arXiv, 
Conjecture \ref{str-conj} was affirmatively solved in a \lq \lq strong" form for 
smooth projective varieties in \cite{Mat18a} and \cite{Mat18b}. 
This paper is a modified and shorten manuscript. 
In the previous version, 
it is shown that the numerical dimension of $K_{Y}$ is equal to zero in Theorem \ref{main-thm} under the assumption that $Y$ admits a good minimal model 
(which is true when $\dim Y \leq 3$) 
by developing the techniques in this paper. 
In \cite{Mat18a} and \cite{Mat18b}, 
the strategy explained in this paper turns out well.

 %Further this paper gives a refinement of the solution for Yau's conjecture proved in \cite{Yan17}. 
%Hence the author believes that this paper is worth to display. 
%}

\section{Preliminaries}\label{Sec-2}

\subsection{Curvature of vector bundles}
In this subsection, we fix the notation for various curvatures 
and recall some curvature formulas for induced metrics.

Let $E$ be a (holomorphic) vector bundle of rank $r$ 
on a complex manifold $X$ of dimension $n$ 
and let $g=\sum_{k, \ell}g_{k\bar{l}}\, e_{k}^\vee \otimes \bar e_{\ell}^\vee$ 
be a (smooth) hermitian metric on $E$. 
For the hermitian vector bundle $(E,g)$, 
the Chern curvature 
$$
\sqrt{-1}\Theta_{g}:=\sqrt{-1}\Theta_{(E,g)} 
\in C^{\infty}(X, \Lambda^{1,1}\otimes \End (E))
$$
is defined by 
$$\sqrt{-1}\Theta_{g}
(\partial/\partial z_i, \partial/\partial \bar z_j)(e_k):=
\sqrt{-1}\Big( - 
\frac{\partial^2 \met{g}{k}{\alpha}}{\partial z_i \partial \bar z_j}
g^{\alpha \bar \ell}+
\frac{\partial \met{g}{k}{\alpha}}{\partial z_i} 
g^{\alpha \bar \beta}
\frac{\partial \met{g}{\beta}{\gamma}}{\partial \bar z_j} 
g^{\gamma \bar \ell}
\Big)
e_{\ell}, 
$$
%$$\sqrt{-1}\Theta_{g}
%(\partial/\partial z_i, \partial/\partial \bar z_j)(e_k):=
%\sqrt{-1}\Big( - g^{k \bar \alpha}
%\frac{\partial^2 \met{g}{\alpha}{\ell}}{\partial z_i \partial \bar z_j}+
%g^{k \bar \gamma}\frac{\partial \met{g}{\gamma}{\beta}}{\partial z_i} 
%g^{\beta \bar \alpha}
%\frac{\partial \met{g}{\alpha}{\ell}}{\partial \bar z_j} \Big)
%e_{\ell}, 
%$$
where $(z_1, z_2, \dots, z_{n})$ is a local coordinate of $X$ and 
$\{e_i\}_{i=1}^{r}$ is a local frame of $E$. 
Here we used the Einstein convention for the summation. 

Let $\Lambda^{m}E$ denote the vector bundle defined 
by the $m$-th exterior product of $E$. 
The hermitian metric $g$ on $E$ induces 
the hermitian metric $\Lambda^{m}g$ on $\Lambda^{m} E$. 
It is easy to see that the Chern curvature $\sqrt{-1}\Theta_{\Lambda^{m}g} 
=\sqrt{-1}\Theta_{(\Lambda^{m}E, \Lambda^{m}g)} $ of 
$(\Lambda^{m}E, \Lambda^{m}g)$ satisfies that 
\begin{align}\label{ref1}
\sqrt{-1}\Theta_{\Lambda^{m}g}(v, \bar w) (a_{1}\wedge a_{2} \wedge \cdots \wedge a_{m})
=\sum_{j=1}^{m}(a_{1}\wedge \cdots \wedge \sqrt{-1}\Theta_{g}(v, \bar w) (a_{j}) \wedge 
\cdots \wedge a_{m})
\end{align}
for any tangent vectors $v, w$ in the (holomorphic) tangent bundle $T_X$ 
and any vector $a_{i}$ in $E$. 
Similarly, it can  be seen that 
the hermitian metric $S^{\ell}g$ on the $\ell$-th symmetric product $S^{\ell}E$ of $E$ 
induced by $g$ satisfies that 
\begin{align}\label{ref2}
\sqrt{-1}\Theta_{S^{\ell}g}(v, \bar w) (a_{1}\odot a_{2} \odot \cdots \odot a_{\ell})
=\sum_{j=1}^{\ell}(a_{1}\odot \cdots \odot \sqrt{-1}\Theta_{g}(v, \bar w) (a_{j}) \odot 
\cdots \odot a_{\ell})
\end{align}
for any tangent vectors $v, w \in T_X$ and any vectors $a_{i} \in E$. 
Further, for a hermitian vector bundle $(F,h)$, 
it can also be seen that the induced hermitian metric $g \otimes h$ on $E \otimes F$ 
satisfies that 
\begin{align}\label{ref3}
\sqrt{-1}\Theta_{g \otimes h}(v, \bar w) (a \otimes b)
=\sqrt{-1}\Theta_{g }(v, \bar w) (a)  \otimes b +
 a  \otimes \sqrt{-1}\Theta_{h}(v, \bar w) (b) 
\end{align}
for any tangent vectors $v, w \in T_X$ and any vectors $a \in E$ and $b \in F$.

The curvature tensor 
$$
R_g=R_{(E,g)} \in 
C^{\infty}(X, \Lambda^{1,1}\otimes E^\vee \otimes \bar E^{\vee})
$$
is defined to be 
$$
R_g(v, \bar w,e, \bar f):= \big \langle 
\sqrt{-1}\Theta_{g}(v, \bar w)(e), f \big \rangle_g
$$
for tangent vectors $v, w \in T_X$ and vectors $e, f \in E$. 
Throughout this paper, the notation $E^\vee$ denotes the dual vector bundle of $E$ and 
$\langle \bullet, \bullet \rangle_g$ denotes the inner product with respect to $g$. 
When $E$ is the tangent bundle $T_X$ and $g$ is a hermitian metric on $T_X$, 
the holomorphic sectional curvature $H_g$ is defined to be 
$$
H_g([v]):=\frac{\Rur{g}{v}{v}}{|v|_g^{4}}
$$
for a non-zero tangent vector $v \in T_X$, 
which can be seen as a smooth function 
on the projective space bundle $\mathbb{P}(T_X ^\vee)$ 
(that is, the set of all complex lines $[v]$ in $T_X$). 
The holomorphic section curvature is called {\textit{positive}} 
(resp. {\textit{semi-positive}}) 
if $H_{g}([v]) > 0$  
(resp. $H_{g}([v]) \geq 0$) holds for any non-zero tangent vector $v \in T_X$. 
We remark that there exists the minimum value of $H_{g}$ on $\mathbb{P}(T_{X,p} ^\vee)$ 
at every point $p \in X$ by compactness of  $\mathbb{P}(T_{X,p} ^\vee)$. 

If $g$ is a K\"ahler metric 
(that is, the associated $(1,1)$-form $\omega_g$ is $d$-closed), 
the following symmetry holds: 
$$
R_{g}(e_{i}, \bar e_{j}, e_{k}, \bar e_{\ell})
=R_{g}(e_{k}, \bar e_{\ell}, e_{i}, \bar e_{j})
=R_{g}(e_{k}, \bar e_{j}, e_{i}, \bar e_{\ell}). 
$$
From the above symmetry, we can obtain Royden's lemma (see \cite{Roy80}) 
and a refinement of \cite[Lemma 4.1]{Yan17c}, 
which play a crucial role in the proof of Theorem \ref{main-thm}.

\begin{lemm}[{\cite{Roy80}}]\label{Royden}
Let $g$ be a K\"ahler metric of $X$ and $m$ be an arbitrary positive integer. 
For tangent vectors $\{e_i\}_{i=1}^{m}$ in $T_{X,p}$ at a point $p \in X$, 
the following equality holds\,$:$
\begin{align*}
\sum_{i,j=1}^{m} \Rur{g}{e_i}{e_j}
=\frac{1}{2}\Big\{
\sum_{k=1}^{m} \Rur{g}{e_k}{e_k}+
\frac{1}{4^m}\sum_{\gamma \in I^{m}} \Rur{g}{\eta_{\gamma}}{\eta_{\gamma}} 
\Big\}, 
\end{align*}
where $I:=\{1, -1, \sqrt{-1}, -\sqrt{-1}\}$ and 
$\eta_\gamma :=\sum_{k=1}^{m} \e_{k} e_{k}$
for $\gamma=(\e_1, \e_2, \dots, \e_m) \in I^{m}$. 
In particular, if the holomorphic sectional curvature is semi-positive, 
then the left hand side is non-negative.
\end{lemm}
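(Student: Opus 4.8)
The plan is to prove Royden's lemma as an identity of multilinear expressions in the curvature tensor, relying only on the K\"ahler symmetries listed just above the statement. I would first set $S:=\sum_{i,j=1}^m R_g(e_i,\bar e_j,e_i,\bar e_j)$, which (after symmetrizing) is a Hermitian form in the variable $(x_1,\dots,x_m)$ evaluated at the all-ones vector; more precisely, define the quartic form $Q(x):=R_g(\sum_k x_k e_k, \overline{\sum_\ell x_\ell e_\ell}, \sum_i x_i e_i, \overline{\sum_j x_j e_j})$, so that $Q(\eta_\gamma)=R_g(\eta_\gamma,\bar\eta_\gamma,\eta_\gamma,\bar\eta_\gamma)$ for $\gamma\in I^m$. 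Expanding $Q(x)$ multilinearly produces a sum over quadruples $(i,j,k,\ell)$ of $x_i\bar x_j x_k\bar x_\ell\, R_g(e_i,\bar e_j,e_k,\bar e_\ell)$, and the point of the $I^m$-averaging is that $\frac{1}{4}\sum_{\e\in I}\e_i\bar\e_j\e_k\bar\e_\ell$ is $1$ when the multiset $\{i,k\}$ equals the multiset $\{j,\ell\}$ (i.e.\ $i=j,k=\ell$ or $i=\ell,k=j$) and $0$ otherwise; iterating over the $m$ coordinates shows $\frac{1}{4^m}\sum_{\gamma\in I^m}Q(\eta_\gamma)$ picks out exactly the "balanced" terms.

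Next I would carry out that extraction carefully. The balanced quadruples $(i,j,k,\ell)$ split into: the diagonal $i=j=k=\ell$, contributing $\sum_k R_g(e_k,\bar e_k,e_k,\bar e_k)$; the case $i=j\neq k=\ell$, contributing $\sum_{i\neq k}R_g(e_i,\bar e_i,e_k,\bar e_k)$; and the case $i=\ell\neq k=j$, contributing $\sum_{i\neq k}R_g(e_i,\bar e_k,e_k,\bar e_i)$. By the K\"ahler symmetry $R_g(e_i,\bar e_k,e_k,\bar e_i)=R_g(e_k,\bar e_k,e_i,\bar e_i)$ — reorder using $R_g(a,\bar b,c,\bar d)=R_g(c,\bar b,a,\bar d)$ and $R_g(a,\bar b,c,\bar d)=R_g(c,\bar d,a,\bar b)$ — the last two sums are equal, so
\begin{align*}
\frac{1}{4^m}\sum_{\gamma\in I^m} R_g(\eta_\gamma,\bar\eta_\gamma,\eta_\gamma,\bar\eta_\gamma)
= \sum_{k=1}^m R_g(e_k,\bar e_k,e_k,\bar e_k) + 2\sum_{i\neq k} R_g(e_i,\bar e_i,e_k,\bar e_k).
\end{align*}
On the other hand, $2\sum_{i,j}R_g(e_i,\bar e_j,e_i,\bar e_j) = 2\sum_k R_g(e_k,\bar e_k,e_k,\bar e_k) + 2\sum_{i\neq j}R_g(e_i,\bar e_j,e_i,\bar e_j)$, and again the K\"ahler symmetry $R_g(e_i,\bar e_j,e_i,\bar e_j)=R_g(e_i,\bar e_i,e_j,\bar e_j)$ converts the off-diagonal sum into $2\sum_{i\neq j}R_g(e_i,\bar e_i,e_j,\bar e_j)$. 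Comparing the two displays gives
\begin{align*}
2\sum_{i,j=1}^m R_g(e_i,\bar e_j,e_i,\bar e_j) = \sum_{k=1}^m R_g(e_k,\bar e_k,e_k,\bar e_k) + \frac{1}{4^m}\sum_{\gamma\in I^m} R_g(\eta_\gamma,\bar\eta_\gamma,\eta_\gamma,\bar\eta_\gamma),
\end{align*}
which is exactly the asserted identity after dividing by $2$.

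Finally, for the last sentence of the statement: if the holomorphic sectional curvature is semi-positive, then $R_g(v,\bar v,v,\bar v)=H_g([v])\,|v|_g^4\ge 0$ for every tangent vector $v$ (including $v=0$ trivially), so in particular $R_g(e_k,\bar e_k,e_k,\bar e_k)\ge 0$ for each $k$ and $R_g(\eta_\gamma,\bar\eta_\gamma,\eta_\gamma,\bar\eta_\gamma)\ge 0$ for each $\gamma\in I^m$; hence the right-hand side is a sum of non-negative terms, so the left-hand side $\sum_{i,j}R_g(e_i,\bar e_j,e_i,\bar e_j)$ is non-negative. I expect the main obstacle to be purely bookkeeping: getting the combinatorial factors in the $I^m$-average right and tracking exactly which K\"ahler symmetry is invoked at each identification, rather than any conceptual difficulty — the proof is a finite computation once the quartic-form viewpoint and the character sum over $I=\{1,-1,\sqrt{-1},-\sqrt{-1}\}$ are set up.
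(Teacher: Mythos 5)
First, note that the paper itself offers no proof of this lemma: it is quoted from \cite{Roy80}, and the surrounding text only records the K\"ahler symmetries from which it follows, so there is no in-paper argument to compare against. Your overall strategy --- expanding the quartic form $Q(x)=\sum_{i,j,k,\ell}x_i\bar x_j x_k\bar x_\ell\, R_g(e_i,\bar e_j,e_k,\bar e_\ell)$ and using the character sum over the fourth roots of unity to isolate the ``balanced'' quadruples --- is exactly Royden's argument, and your evaluation of the $I^m$-average is correct: the balanced terms are $i=j=k=\ell$, $(i=j)\ne(k=\ell)$, and $(i=\ell)\ne(k=j)$, and the identity $R_g(e_i,\bar e_k,e_k,\bar e_i)=R_g(e_k,\bar e_k,e_i,\bar e_i)$ you invoke there is a genuine K\"ahler symmetry (swap the first and third arguments). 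So the first display of your proof is right.

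The error is in the second half, and it stems from a misreading of the notation. In this paper $\Rur{g}{e_i}{e_j}$ abbreviates $R_g(e_i,\bar e_i,e_j,\bar e_j)$, so the left-hand side of the lemma is $\sum_{i,j}R_g(e_i,\bar e_i,e_j,\bar e_j)$, not your $S=\sum_{i,j}R_g(e_i,\bar e_j,e_i,\bar e_j)$. To reconcile the two you then claim ``the K\"ahler symmetry $R_g(e_i,\bar e_j,e_i,\bar e_j)=R_g(e_i,\bar e_i,e_j,\bar e_j)$'', but this is not one of the listed symmetries and is false in general: the K\"ahler symmetries only permit swapping the first with the third slot, the second with the fourth, and the pair $(1,2)$ with the pair $(3,4)$, so $R_g(e_1,\bar e_2,e_1,\bar e_2)$ is fixed by all of them and lies in a different orbit from $R_g(e_1,\bar e_1,e_2,\bar e_2)$. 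Concretely, for constant holomorphic sectional curvature $c$ in an orthonormal frame one has $R_g(e_1,\bar e_2,e_1,\bar e_2)=0$ but $R_g(e_1,\bar e_1,e_2,\bar e_2)=c/2$, and your $S$ equals $cm$ while the true left-hand side equals $cm(m+1)/2$; the two sums are genuinely different quantities. The repair is immediate: read the left-hand side correctly as $\sum_{i,j}R_g(e_i,\bar e_i,e_j,\bar e_j)=\sum_{k} R_g(e_k,\bar e_k,e_k,\bar e_k)+\sum_{i\ne j}R_g(e_i,\bar e_i,e_j,\bar e_j)$, double it, and compare with your (correct) formula for the average $\frac{1}{4^m}\sum_{\gamma}Q(\eta_\gamma)$; no further symmetry is needed. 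The ``in particular'' clause is then fine as you argue it.
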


\begin{lemm}[{\cite[Lemma 4.1]{Yan17c}, \cite[Lemma 6.1]{Yan17}}]\label{Yang-ineq}
Let $g$ be a K\"ahler metric of $X$ and $V$ be a subspace of $T_{X,p}$ at a point $p \in X$. 
If a unit vector $x \in V$ minimizes 
the holomorphic sectional curvature $H_g$ on $V$, 
that it, it satisfies 
$$
\min\{H_g([v])\, |\, {0 \not = v \in V}\}=H_g([x]), 
$$
then we have 
$$
2\Rur{g}{x}{w} \geq (1+|\langle x, w \rangle_g|^2) \Rur{g}{x}{x} 
$$
for any unit vector $w \in V$.  
In particular, if the holomorphic sectional curvature is semi-positive, 
a minimizer $x$ of $H_g$ on $V$ satisfies that 
$$
\Rur{g}{x}{w} \geq 0
$$
for any tangent vector $w \in V$. 
\end{lemm}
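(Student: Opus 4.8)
The plan is a first- and second-order variational argument: perturb the minimizing complex line $[x]$ inside $V$ and exploit that the perturbation parameter $0$ is a genuine minimum of the resulting function. I would first dispose of the case $\langle x, w\rangle_g = 0$. So let $w \in V$ be a unit vector with $\langle x, w\rangle_g = 0$, and for each phase $\e \in I := \{1, -1, \sqrt{-1}, -\sqrt{-1}\}$ and real $s$ near $0$ put $\psi_\e(s) := H_g([x + s\e w])$. Since $x$ minimizes $H_g$ on $V$, each $\psi_\e$ attains its minimum at $s = 0$, so $\psi_\e'(0) = 0$ and $\psi_\e''(0) \geq 0$. Using $\|x + s\e w\|_g^2 = 1 + s^2$ (orthogonality enters here) and expanding $R_g$ by (conjugate-)multilinearity, a routine Taylor expansion yields
$$\psi_\e(s) = \Rur{g}{x}{x} + 4 s\,\Re\big(\e\, R_g(w,\bar x, x, \bar x)\big) + s^2\big(2\,\Re(\e^2 B) + 4 C - 2\,\Rur{g}{x}{x}\big) + O(s^3),$$
where $B := R_g(w, \bar x, w, \bar x)$ and $C := R_g(x, \bar x, w, \bar w) = \Rur{g}{x}{w}$. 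The expansion uses both Kähler symmetries of $R_g$; in particular the identity $R_g(a, \bar b, c, \bar d) = R_g(c, \bar b, a, \bar d)$ collapses the four mixed quadratic terms (such as $R_g(w, \bar x, x, \bar w)$) into a single copy of $C$, which is exactly what makes the eventual constant sharp.

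\textbf{Extracting the two consequences.}
The condition $\psi_\e'(0) = 0$ for $\e = 1$ and $\e = \sqrt{-1}$ forces both the real and the imaginary parts of $R_g(w, \bar x, x, \bar x)$ to vanish, that is, $R_g(w, \bar x, x, \bar x) = 0$ for every unit $w$ orthogonal to $x$; decomposing an arbitrary $u \in V$ along $x$ then gives $R_g(u, \bar x, x, \bar x) = \langle u, x\rangle_g\,\Rur{g}{x}{x}$. Next, summing the inequalities $\psi_\e''(0) \geq 0$ over $\e \in I$ annihilates the terms $\Re(\e^2 B)$, since $\sum_{\e \in I}\e^2 = 0$, and leaves $16\,C - 8\,\Rur{g}{x}{x} \geq 0$, i.e.\ $2\,\Rur{g}{x}{w} \geq \Rur{g}{x}{x}$. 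This proves the inequality in the orthogonal case, and the averaging over $I$ is precisely the mechanism underlying Royden's Lemma \ref{Royden}.

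\textbf{The general case and the corollary.}
For an arbitrary unit vector $w \in V$, write $w = \alpha x + w'$ with $\alpha := \langle w, x\rangle_g$ and $w'$ orthogonal to $x$, so $\|w'\|_g^2 = 1 - |\alpha|^2$. Expanding $\Rur{g}{x}{w}$ by (conjugate-)multilinearity and using the vanishing $R_g(w', \bar x, x, \bar x) = 0$ just established (together with the Kähler and Hermitian symmetries, which kill the two cross terms) gives $\Rur{g}{x}{w} = |\alpha|^2\,\Rur{g}{x}{x} + R_g(x, \bar x, w', \overline{w'})$. Applying the orthogonal case to the unit vector $w'/\|w'\|_g$ yields $2\,R_g(x, \bar x, w', \overline{w'}) \geq \|w'\|_g^2\,\Rur{g}{x}{x}$, and therefore
$$2\,\Rur{g}{x}{w} \geq 2|\alpha|^2\,\Rur{g}{x}{x} + (1 - |\alpha|^2)\,\Rur{g}{x}{x} = (1 + |\langle x, w\rangle_g|^2)\,\Rur{g}{x}{x},$$
which is the assertion. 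Finally, if the holomorphic sectional curvature is semi-positive, then $\Rur{g}{x}{x} = H_g([x]) \geq 0$, so the right-hand side is non-negative, whence $\Rur{g}{x}{w} \geq 0$ for unit $w \in V$, and thus for all $w \in V$ by the homogeneity of $w \mapsto R_g(x, \bar x, w, \bar w)$.

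\textbf{Main obstacle.}
The Taylor expansion itself is mechanical; the delicate point is the interaction with the complex structure. One must perturb along all the phases $\e \in I$ (equivalently, use the full real Hessian of $t \mapsto H_g([x + t w])$ on $\mathbb{C} \cong \mathbb{R}^2$, not a single real one-parameter family) so that the off-diagonal curvature term $B = R_g(w, \bar x, w, \bar x)$ averages away, and one must exploit the Kähler symmetry $R_g(a, \bar b, c, \bar d) = R_g(c, \bar b, a, \bar d)$ to obtain the sharp constant $1 + |\langle x, w\rangle_g|^2$ rather than a weaker one.
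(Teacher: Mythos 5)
Your proof is correct: the Taylor expansion of $\psi_\e(s)=H_g([x+s\e w])$, the first-order condition forcing $R_g(w,\bar x,x,\bar x)=0$ for $w\perp x$, the averaging over the four phases in $I$ to annihilate the term $\Re(\e^2 R_g(w,\bar x,w,\bar x))$ in the second-order condition, and the reduction of a general unit $w=\alpha x+w'$ to the orthogonal case all check out and yield exactly the constant $1+|\langle x,w\rangle_g|^2$. The paper gives no proof of its own --- it defers to \cite[Lemma 6.1]{Yan17} and merely remarks that the argument persists when $V$ is a proper subspace of $T_{X,p}$ --- and your write-up is a faithful reconstruction of that variational argument, including the one point where the subspace hypothesis enters (the perturbations $x+s\e w$ must stay inside $V$, which they do since $V$ is a complex subspace).
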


The case where $V$ in Lemma \ref{Yang-ineq} coincides with 
the whole tangent space $T_{X,p}$ 
is proved in \cite[Lemma 6.1]{Yan17}. 
It is easy to see that the same argument as in \cite[Lemma 6.1]{Yan17} 
works even in the case of $V$ being a subspace of $T_{X,p}$, 
and thus we omit the proof of Lemma \ref{Yang-ineq}. 
Note that we essentially use the assumption that $g$ is a K\"ahler metric 
in the proof of the above lemmas.

\subsection{RC positivity and vanishing theorems}
In this subsection, 
we recall the notion of RC positivity of vector bundles introduced in \cite{Yan17}. 
Moreover we generalize a vanishing theorem for RC-negative vector bundles 
to treat RC semi-positivity in the proof of Theorem \ref{main-thm}.

\begin{defi}[RC positivity, \cite{Yan17}]\label{RC-def}
A hermitian vector bundle $(E,  g)$ on a complex manifold $X$ is called 
{\it{RC positive}} (resp. {\it{RC negative}}) {\textit{at}} $p \in X$, 
if for any non-zero vector $b \in E_p$ 
there exists a tangent vector $v \in T_{X,p}$ such that 
$$
\Rur{g}{v}{b} > 0\quad \text{(resp. $<0$)} \text{ at } p.
$$
Further $(E,  g)$ is simply called {\it{RC positive}} (resp. {\it{RC negative}}), 
if it is RC positive (resp. RC negative) at every point in $X$. 
\end{defi}

\begin{rem}\label{posi-rem}
A hermitian line bundle is RC positive if and only if it is 
$(n-1)$-positive (that is, it admits a hermitian metric 
whose Chern curvature has at least one positive eigenvalue everywhere).  
Recall that $n$ is the dimension of $X$. 
\end{rem}

If a line bundle admits a hermitian metric 
satisfying the condition of RC positivity (that is, $(n-1)$-positivity), 
then a partial vanishing theorem of Andreotti-Grauert type holds. 
The converse implication (which was first asked in \cite{DPS96}) 
was established in \cite{Yan17b} 
(see also \cite{Mat13}, \cite{Ott12}, \cite{Tot13} for related topics). 
In summary, we have the following result\,$:$

\begin{theo}[\cite{Yan17b}, cf. \cite{DPS96}, \cite{Mat13}]\label{AG-thm}
Let $L$ be a line bundle on a compact complex manifold $X$ of dimension $n$. 
Then the following conditions are equivalent\,$:$\vspace{0.1cm}\\
\quad $\bullet$ The dual line bundle $L^\vee$ is not pseudo-effective. \\
\quad $\bullet$ $L$ admits a hermitian metric 
with RC positive curvature $($$($$n-1$$)$-positive curvature$)$. \vspace{0.1cm}\\
Moreover, when $X$ is a smooth projective variety, 
the above conditions are equivalent to the following condition\,$:$
\vspace{0.1cm}\\
\quad $\bullet$ $L$ is $(n-1)$-ample,  
that is, for any coherent sheaf $\mathcal{F}$ on $X$, 
there is a positive integer $m_0$ such that 
$$
H^{n}(X, \mathcal{F}\otimes L^{\otimes m})=0
\text{ for any } m \geq m_{0}. 
$$
%Here $n$ is the dimension of $X$. 
\end{theo}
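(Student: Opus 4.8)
The statement collects theorems of Yang \cite{Yan17b}, Demailly--Peternell--Schneider \cite{DPS96} and Totaro \cite{Tot13}; the plan is to establish the equivalence of the first two bullets on an arbitrary compact complex manifold, and then to adjoin the projective characterization. Fix a Gauduchon metric $\omega$ on $X$, i.e. a Hermitian metric with $\deldel\,\omega^{n-1}=0$ (these always exist by Gauduchon's theorem), and for a smooth Hermitian metric $h$ on $L$ put $\deg_\omega(L):=\int_X \frac{\sqrt{-1}}{2\pi}\Theta_{h}\wedge\omega^{n-1}$. Because two metrics on $L$ differ by $\ddbar$ of a smooth function and $\int_X \ddbar\varphi\wedge\omega^{n-1}=0$ by the Gauduchon condition, this degree is independent of $h$. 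The global input I would use is the duality of Lamari--Toma: a real $(1,1)$-class on a compact complex manifold is pseudo-effective if and only if it pairs non-negatively with $\omega^{n-1}$ for every Gauduchon metric $\omega$. Applied to $c_1(L^\vee)=-c_1(L)$ this turns the first bullet into the assertion that there exists a Gauduchon metric $\omega$ with $\deg_\omega(L)>0$, while by Remark \ref{posi-rem} the second bullet says precisely that $L$ is $(n-1)$-positive. The heart of the proof is therefore the equivalence
\[
(\star)\qquad \big(\exists\ \omega\ \text{Gauduchon with}\ \deg_\omega(L)>0\big)\ \Longleftrightarrow\ \big(L\ \text{is}\ (n-1)\text{-positive}\big).
\]

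The forward implication of $(\star)$ is the partial converse to the Andreotti--Grauert theorem asked in \cite{DPS96}, and it is the step I expect to be the main obstacle. Fix a Gauduchon metric $\omega$ with $\deg_\omega(L)>0$ and a smooth metric $h_0$ with curvature $\eta:=\frac{\sqrt{-1}}{2\pi}\Theta_{h_0}\in c_1(L)$. The aim is to find $\varphi\in C^\infty(X,\mathbb{R})$ so that the curvature of $h_0e^{-\varphi}$, which represents $\eta+\ddbar\varphi$, has strictly positive trace with respect to $\omega$ at every point; since a real $(1,1)$-form with positive $\omega$-trace must have at least one positive eigenvalue, this forces $(n-1)$-positivity. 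Using the pointwise identity $\beta\wedge\omega^{n-1}=\frac{1}{n}(\operatorname{tr}_\omega\beta)\,\omega^{n}$, the desired condition becomes the Chern--Laplace equation
\[
\operatorname{tr}_\omega(\ddbar\varphi)=c-\operatorname{tr}_\omega\eta,\qquad c:=\frac{n\,\deg_\omega(L)}{\int_X\omega^{n}}>0.
\]
The operator $\varphi\mapsto\operatorname{tr}_\omega(\ddbar\varphi)$ is a second order elliptic operator with kernel the constants, and the Gauduchon condition gives $\int_X\operatorname{tr}_\omega(\ddbar\varphi)\,\omega^{n}=0$; by the Fredholm alternative the equation is solvable exactly when its right-hand side integrates to zero against $\omega^{n}$, and the value of $c$ is chosen so that this holds. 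A solution $\varphi$ then yields $\operatorname{tr}_\omega(\eta+\ddbar\varphi)\equiv c>0$, so $L$ is $(n-1)$-positive, and combined with the Lamari--Toma reformulation this proves that non-pseudo-effectivity of $L^\vee$ implies the second bullet.

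For the reverse implication of $(\star)$ I would promote the pointwise positivity to a positive Gauduchon degree. Given a metric whose curvature $\eta$ has a positive eigenvalue at every point, one uses Michelsohn's correspondence --- every positive $(n-1,n-1)$-form is $\omega^{n-1}$ for a unique positive $(1,1)$-form $\omega$ --- to reduce the problem to producing a $\deldel$-closed positive $(n-1,n-1)$-form $\Omega$ with $\int_X\eta\wedge\Omega>0$, the Gauduchon condition on $\omega$ being exactly $\deldel\,\Omega=0$. Building $\Omega$ adapted to the positive eigendirection of $\eta$ and passing to the Gauduchon representative of its Michelsohn root supplies such an $\Omega$, equivalently removing $c_1(L^\vee)=-c_1(L)$ from the pseudo-effective cone. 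This is the complementary implication, which I would likewise extract from \cite{Yan17b}; together with the previous paragraph it settles the equivalence of the first two bullets for every compact complex manifold $X$.

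Finally, for projective $X$ I would adjoin the third bullet. The implication from $(n-1)$-positivity to $(n-1)$-ampleness is the Andreotti--Grauert vanishing theorem: a Hermitian metric on $L$ with at least one positive eigenvalue everywhere yields $H^{n}(X,\mathcal{F}\otimes L^{\otimes m})=0$ for all $m\gg 0$ and every coherent sheaf $\mathcal{F}$. The converse, that $(n-1)$-ampleness forces $L^\vee$ to be non-pseudo-effective, is Totaro's algebraic characterization of $(n-1)$-ample line bundles \cite{Tot13}, which shows in particular that $(n-1)$-ampleness is numerical and, via the duality of \cite{BDPP}, dual to the failure of pseudo-effectivity of $L^\vee$. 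Combined with the equivalence of the first two bullets already proved, this closes the three-way equivalence. The one genuinely hard ingredient is the analytic construction in the forward implication of $(\star)$; the remaining implications reduce to the Lamari--Toma and \cite{BDPP} dualities, the classical Andreotti--Grauert vanishing, and Totaro's numerical description of $(n-1)$-ampleness.
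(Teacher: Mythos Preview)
The paper does not prove this theorem; it is recorded in the preliminaries (Section~\ref{Sec-2}) as a result quoted from \cite{Yan17b}, \cite{DPS96}, and \cite{Mat13}, and is used as a black box in Step~\ref{step1}. There is therefore no ``paper's own proof'' to compare against.

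Your sketch is a faithful outline of the argument in \cite{Yan17b}: recast both bullets via Gauduchon degrees and the Lamari duality, and for the hard implication solve the Chern--Laplace trace equation $\operatorname{tr}_\omega(\ddbar\varphi)=c-\operatorname{tr}_\omega\eta$ with $c>0$ to force at least one positive eigenvalue of the new curvature everywhere. That is exactly the mechanism Yang uses. The one place your write-up is genuinely thin is the reverse implication: ``building $\Omega$ adapted to the positive eigendirection of $\eta$ and passing to the Gauduchon representative of its Michelsohn root'' is a slogan, not a construction. A pointwise positive eigendirection does not by itself produce a global $\deldel$-closed positive $(n{-}1,n{-}1)$-form with strictly positive pairing against $\eta$; one has to argue, as in \cite{Yan17b}, that the local data can be globalized (or appeal to a different route entirely). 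For the projective addendum, your invocation of Andreotti--Grauert for one direction is correct, and the converse is indeed the content of \cite{Tot13} (and \cite{Mat13}, \cite{Ott12}); the paper also cites these. So apart from the hand-waved reverse step, your plan matches the literature the paper is citing.
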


In the proof of Theorem \ref{main-thm}, 
we need the following vanishing theorem for partially RC-negative vector bundles, 
which can be seen as a generalization of \cite[Theorem 3.5]{Yan17}.

\begin{theo}\label{vanish-prop}
Let $E$ and $F$ be vector bundles on a compact complex manifold $X$,  
and let $t: F \to E$ be an injective sheaf morphism. 
$($Note that we use the same notation for 
vector bundles and locally free sheaves$)$. 
Assume that there is a $($proper$)$ subvariety $V$ on $X$ 
with the following properties\,$:$ 
\\
%\quad $\bullet$ 
%$t: F \to E$ determines an injective bundle morphism on $X \setminus V$.\\
\quad $\bullet$ 
$E$ admits a hermitian metric $g$ $($defined on $X$$)$ such that  
for any point $p \in X \setminus V$ and 
for any non-zero vector 
$$
b \in \Image (t: F_p \to E_p)  \subset E_p,
$$   
there is a tangent vector $v \in T_{X,p}$ satisfying 
$$
\Rur{g}{v}{b}<0. 
$$

Then we have 
$$
H^{0}(X, F\otimes \mathcal{I}_{V})=0, 
$$
where $\mathcal{I}_V$ is the ideal sheaf associated to the subvariety $V$. 
\end{theo}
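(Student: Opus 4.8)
The plan is to argue by contradiction, running the classical maximum-principle (Bochner) argument for holomorphic sections against the partial negativity of the curvature, in the spirit of \cite[Theorem 3.5]{Yan17}. Suppose $H^{0}(X, F \otimes \mathcal{I}_V) \neq 0$ and fix a non-zero section $s$. Since $F$ is locally free, the inclusion $\mathcal{I}_V \hookrightarrow \mathcal{O}_X$ induces an inclusion $F \otimes \mathcal{I}_V \hookrightarrow F$, so $s$ is a non-zero holomorphic section of $F$ whose value at every point of $V$ vanishes (its germ there lies in $\mathfrak{m}_p F_p$). Since $t \colon F \to E$ is injective as a sheaf morphism, it is injective on global sections, so $u := t(s) \in H^{0}(X, E)$ is non-zero; moreover $u(p) = t_p(s(p)) = 0$ for every $p \in V$.

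Next I would look at the smooth non-negative function $\varphi := |u|^{2}_{g}$ on the compact manifold $X$, and choose a point $p_{0}$ at which $\varphi$ attains its maximum. If $\varphi(p_{0}) = 0$ then $u \equiv 0$, contradicting $u \neq 0$; hence $\varphi(p_{0}) > 0$. Because $u$ vanishes along $V$, the point $p_{0}$ lies in $X \setminus V$, and $b := u(p_{0})$ is a non-zero vector which, being $t_{p_0}(s(p_0))$, belongs to $\Image(t \colon F_{p_0} \to E_{p_0})$. Applying the curvature hypothesis at $p_{0} \in X \setminus V$ to this $b$ yields a tangent vector $v \in T_{X,p_0}$ with $R_{g}(v, \bar v, b, \bar b) < 0$.

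Finally I would invoke the standard Bochner--Kodaira identity for the holomorphic section $u$ of $(E,g)$: writing $D'$ for the $(1,0)$-part of the Chern connection, one has the identity of $(1,1)$-forms
$$\sqrt{-1}\partial\overline{\partial}\, |u|^{2}_{g} \;=\; \sqrt{-1}\langle D'u, D'u\rangle_{g} - \langle \sqrt{-1}\Theta_{g}\, u, u\rangle_{g},$$
where the first term on the right is a non-negative $(1,1)$-form. Since $p_{0}$ is a maximum of $\varphi$, the complex Hessian of $\varphi$ is negative semi-definite at $p_{0}$; pairing the identity with $v \wedge \bar v$ at $p_{0}$ gives
$$0 \;\geq\; \bigl(\sqrt{-1}\partial\overline{\partial}|u|^{2}_{g}\bigr)(v, \bar v) \;=\; |D'_{v}u|^{2}_{g} - R_{g}(v, \bar v, b, \bar b) \;\geq\; -\,R_{g}(v, \bar v, b, \bar b),$$
so $R_{g}(v, \bar v, b, \bar b) \geq 0$, contradicting the choice of $v$. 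Hence $H^{0}(X, F \otimes \mathcal{I}_V) = 0$.

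The computations involved are routine: the sheaf-theoretic bookkeeping (that a section of $F \otimes \mathcal{I}_V$ is a section of $F$ vanishing on $V$, and that $t$ is injective on sections) and the precise signs in the Bochner identity and the maximum principle. The only place where the subvariety $V$ is genuinely used is the observation that a positive maximum of $|u|^{2}_{g}$ cannot occur on $V$; this is exactly what makes the curvature hypothesis, assumed only on $X \setminus V$, applicable at the maximum point $p_{0}$. I therefore do not anticipate any serious obstacle beyond keeping these conventions consistent.
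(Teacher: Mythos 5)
Your proposal is correct and follows essentially the same route as the paper's own proof: pass from a section of $F\otimes\mathcal{I}_V$ to the induced holomorphic section of $E$, locate the maximum of its $g$-norm (necessarily off $V$ because the section vanishes there), and contradict the maximum principle via the Bochner-type identity for $\sqrt{-1}\partial\overline{\partial}|u|^2_g$ together with the pointwise RC-negativity hypothesis. The sign bookkeeping and the role of $V$ are handled exactly as in the paper, so there is nothing to add.
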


\begin{proof}
For a given section $s$ in $H^{0}(X, F\otimes \mathcal{I}_{V})$, 
we consider the section 
$\widetilde{s}$ of $E$ obtained from the induced injective morphism 
$$
H^{0}(X, F\otimes \mathcal{I}_{V}) \subset H^{0}(X, F) \hookrightarrow H^{0}(X, E). 
$$ 
It is sufficient to check that $\widetilde{s}$ is identically zero on $X$. 
We take a point $p_{0} \in X$ that attains the maximum value of 
the (point-wise) norm $|\widetilde{s}|_{g}$. 
We may assume that $\widetilde s(p_0) $ is a non-zero vector in $E$. 
The section $\widetilde{s}$ is identically zero on $V$ 
by the construction of $\widetilde{s}$, 
and thus $p_{0}$ is outside the subvariety $V$. 

Now we have the following equality\,$:$
\begin{align}\label{max}
\deldel |\widetilde{s}|^2_{g}
= \sqrt{-1}\big \langle D'\widetilde{s}, D'\widetilde{s} \big \rangle_g - 
\big \langle \sqrt{-1}\Theta_{g}
(\widetilde{s}), \widetilde{s} \big \rangle_g,  
\end{align}
where $D'$ is the $(1,0)$-part of the Chern connection. 
The left hand side is a semi-negative $(1,1)$-form at $p_{0}$ by the choice of $p_0$. 
%By applying the non-zero vector $\widetilde{s}(p_0) \in E_{p_0}$, 
On the other hand, since 
$$
b:=\widetilde{s}(p_0) \in \Image (t: F_{p_0} \to E_{p_0})  \subset E_{p_0}
$$
is a non-zero vector, we can find a tangent vector $v \in T_{X,p_0}$ such that 
$$
\big \langle \sqrt{-1}\Theta_{g}(v, \bar{v})
(\widetilde{s}), \widetilde{s} \big \rangle_g = 
\Rur{g}{v}{b}<0
\text{ at $p_{0}$}
$$
by the assumption. This is a contradiction.
\end{proof}

\section{Proof of Theorem \ref{main-thm} and its corollaries}\label{Sec-3}

This section is devoted to the proof of Theorem \ref{main-thm} and its corollary.

\begin{proof}[Proof of Theorem \ref{main-thm}]
The proof can be divided into two steps. 
The main idea comes from Step \ref{step1}, 
in which we consider the situation of $\phi$ being a smooth morphism. 
In Step \ref{step2}, by modifying this idea in Step \ref{step1} 
to treat an arbitrary meromorphic map $\phi$, 
we prove a generalization of Theorem \ref{main-thm} 
for a projective variety $Y$ with canonical singularities.

%Finally we prove the general case in Step \ref{step4} 
%by investigating a minimal model of $Y$ and its canonical model. 

\begin{step}[The case of $\phi$ being a smooth morphism]\label{step1}
In this step, we show only that $K_Y$ is not an ample line bundle 
under the situation 
that $\phi: X \dashrightarrow Y$ in Theorem \ref{main-thm} 
is a smooth morphism.

Let $g$ be a K\"ahler metric of $X$ with semi-positive holomorphic sectional curvature. 
The surjective bundle morphism 
$$
(T_X, g) \xrightarrow{\quad d\phi_* \quad} (\phi^{*} T_Y, h)
$$
can be obtained from the differential map $d\phi_*$. 
We remark that the above morphism is surjective as a bundle morphism 
since $\phi$ is a smooth morphism. 
Further the K\"ahler metric $g$ and the above bundle morphism 
induce the hermitian metric $h$ on the pull-back $\phi^{*} T_Y$ of 
the tangent bundle $T_Y$ of $Y$. 
We put $m := \dim Y$, $\Omega_X:=T_X^{\vee}$, and $\Omega_Y:=T_Y^{\vee}$. 
We obtain the injective bundle morphism 
\begin{equation}\label{eq-surj}
(\phi^{*} \Lambda^m \Omega_Y=\phi^{*} K_Y, \Lambda^m h^\vee)
\xrightarrow{\quad \Lambda^m d\phi^* \quad} (\Lambda^m \Omega_X, \Lambda^m g^\vee)
\end{equation}
by taking the dual vector bundle and the $m$-th exterior product, 
where $\bullet^\vee$ denotes the dual bundle of vector bundles  
or the dual hermitian metric. 
Then the following claim follows from Royden's lemma. 

\begin{claim}\label{claim-1}
For any point $p \in X$ and any non-zero vector 
$$
b \in \Image \big( \Lambda^m d\phi^* :\phi^{*} K_{Y,p} 
\xrightarrow{ \quad \Lambda^m d\phi^* \quad } \Lambda^m \Omega_{X,p} \big)
\subset \Lambda^m \Omega_{X,p} \text{ at } p, 
$$
there exists a tangent vector $v \in T_{X,p}$ 
with the following property\,$:$
$$\text{
$\bullet$ $d\phi_{*}(v) \not = 0$ in $T_{Y, \phi(p)}$. \quad \quad 
$\bullet$ $\Rur{\Lambda^m g^\vee}{v}{b} \leq 0$. 
} 
$$
\end{claim}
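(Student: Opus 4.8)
The plan is to establish Claim \ref{claim-1} pointwise: fix a unit generator of the image line of $\Lambda^m d\phi^*$, rewrite $\Rur{\Lambda^m g^\vee}{v}{b}$ through the exterior-power curvature formula \eqref{ref1} together with the dual-bundle sign, and then conclude by Royden's lemma (Lemma \ref{Royden}).

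First I would choose a good frame at a fixed point $p \in X$. Since $\phi$ is a smooth morphism, $d\phi_{*,p}\colon T_{X,p}\to T_{Y,\phi(p)}$ is surjective, so $S_p:=\ker d\phi_{*,p}$ has dimension $n-m$; let $H_p:=S_p^{\perp}$ be its $g$-orthogonal complement and take a $g$-orthonormal basis $e_1,\dots,e_n$ of $T_{X,p}$ with $e_1,\dots,e_m$ spanning $H_p$ and $e_{m+1},\dots,e_n$ spanning $S_p$, with dual basis $e_1^{\vee},\dots,e_n^{\vee}$ (which is $g^{\vee}$-orthonormal). Because $\Image(d\phi^*_p\colon \Omega_{Y,\phi(p)}\to\Omega_{X,p})$ is the annihilator of $\ker d\phi_{*,p}=S_p$, it equals $\Spn(e_1^{\vee},\dots,e_m^{\vee})$, so $\Image(\Lambda^m d\phi^*_p)$ is the line generated by the unit vector $\omega:=e_1^{\vee}\wedge\cdots\wedge e_m^{\vee}$. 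Hence any $b$ as in the claim equals $c\,\omega$ for some $c\neq 0$, and since $\Rur{\Lambda^m g^\vee}{v}{b}=|c|^2\Rur{\Lambda^m g^\vee}{v}{\omega}$, it suffices to produce $v\in H_p\setminus\{0\}$ (which automatically has $d\phi_*(v)\neq 0$) with $\Rur{\Lambda^m g^\vee}{v}{\omega}\leq 0$.

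Next I would reduce this to a statement about $T_X$. By \eqref{ref1} applied to $(\Lambda^m\Omega_X,\Lambda^m g^{\vee})$, and using that the $e_j^{\vee}$ are $g^{\vee}$-orthonormal (so the cross terms vanish), one gets $\Rur{\Lambda^m g^\vee}{v}{\omega}=\sum_{j=1}^m\Rur{g^\vee}{v}{e_j^{\vee}}$; and since $e_j^{\vee}$ is the covector $g$-dual to $e_j$ while $\sqrt{-1}\Theta_{g^{\vee}}=-\,{}^t(\sqrt{-1}\Theta_g)$, each term equals $-\Rur{g}{v}{e_j}$. So the claim amounts to finding $v\in H_p\setminus\{0\}$ with $\sum_{j=1}^m\Rur{g}{v}{e_j}\geq 0$. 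For this I would apply Lemma \ref{Royden} to the $m$ vectors $e_1,\dots,e_m$: as $g$ is Kähler with semi-positive holomorphic sectional curvature, every term on its right-hand side is of the form $\Rur{g}{u}{u}=|u|_g^4\,H_g([u])\geq 0$ (the term vanishing when $u=0$), so the lemma yields $\sum_{i,j=1}^m\Rur{g}{e_i}{e_j}=\sum_{k=1}^m\Big(\sum_{j=1}^m\Rur{g}{e_k}{e_j}\Big)\geq 0$; hence some $k\in\{1,\dots,m\}$ satisfies $\sum_{j=1}^m\Rur{g}{e_k}{e_j}\geq 0$, and $v:=e_k\in H_p$ does the job.

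I do not expect a genuine obstacle in this step: the content is linear-algebra bookkeeping, and the only points to watch are the identification $\Image(\Lambda^m d\phi^*_p)=\mathbb{C}\,\omega$ and the sign in the dual-bundle curvature identity; no idea beyond \eqref{ref1} and Royden's lemma enters here. The substance of Theorem \ref{main-thm} — treating an arbitrary meromorphic $\phi$, passing to canonical singularities, and analyzing a minimal model and its canonical model — is what the later steps must supply.
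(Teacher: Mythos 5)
Your proposal is correct and follows essentially the same route as the paper: the same orthonormal frame adapted to $\ker d\phi_{*,p}$ and its orthogonal complement, the same reduction via \eqref{ref1} and the dual-bundle sign to the inequality $\sum_{j=1}^m\Rur{g}{e_k}{e_j}\geq 0$ for some $k$, and the same pigeonhole application of Royden's lemma. You are merely more explicit than the paper about the identification of $\Image(\Lambda^m d\phi^*_p)$ with the line $\mathbb{C}\,e_1^\vee\wedge\cdots\wedge e_m^\vee$ and about the sign flip when passing to the dual metric, both of which the paper leaves implicit.
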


\begin{rem}\label{smooth-rem}
Even if $\phi$ is not a smooth morphism on the whole space $X$,  
the argument below still works for a point $p \in X$ at which $\phi$ is smooth. 
This argument will be used again in Step \ref{step2}. 

\end{rem}

\begin{proof}[Proof of Claim \ref{claim-1}]
For a given point $p \in X$, 
we choose an orthonormal basis $\{e_{i}\}_{i=1}^{n}$ of the tangent space 
$T_{X, p}$ at $p$ 
such that $ \{d\phi_{*}(e_{i})\}_{i=1}^{m}$ is also 
an orthonormal basis of $\phi^{*} T_{Y, p}=T_{Y,\phi(p)}$.  
Here $n$ denotes the dimension of $X$. 
We define $V$ by the subspace 
$$
V:=\Spn \langle \{ \{e_i\}_{i=1}^{m} \} \rangle \subset T_{X,p}
$$
spanned by $\{e_i\}_{i=1}^{m}$ and 
the vector $a$ by 
$$
a:=e_{1} \wedge e_{2} \wedge \cdots \wedge e_{m} \in 
\Lambda^m V \subset 
\Lambda^m T_{X,p}.  
$$
It is sufficient for the proof to find a tangent vector $v \in T_{X,p}$ 
such that 
$$\text{
$d \phi_{*}(v) \not = 0$ in $T_{Y, \phi(p)}$ 
\quad and \quad 
$\Rur{\Lambda^m g}{v}{a} \geq 0$  
} 
$$
for a non-zero vector $a=e_{1}\wedge e_2 \wedge \cdots \wedge e_m \in \Lambda ^{m} V$, 
since the image $\Image (\Lambda^m d\phi^*) \subset \Lambda^m \Omega_{X,p}$ 
is spanned by the vector 
$e^\vee_{1} \wedge e^\vee_{2} \wedge \cdots \wedge e^\vee_{m}$, 
where $\{e^\vee_{i}\}_{i=1}^n$ denotes the dual basis of 
$\{e_i\}_{i=1}^n$.

For an arbitrary index $i \in \{1,2,\dots, m\}$, 
we put 
$$
A_{i}:= \Rur{\Lambda^{m}g}{e_i}{a},   
$$ 
%Then we can obtain 
%\begin{align*}
%A_{i}
%=\Rur{\Lambda ^{m}g}{e_i}{a}
%\leq \Rur{\Lambda^{m}h}{e_i}{a'}
%=\Cur{\Lambda^{m}h}{e_i}|a'|^2_{\Lambda^{m}h}
%=\Cur{\Lambda^{m}h}{e_i}. 
%\end{align*}
and for simplicity we put 
$$
B_i(\bullet):=\Cur{g}{e_{i}}(\bullet) \in \End (T_{X,p}).   
$$ 
Then we can easily check that 
$$
\Cur{\Lambda ^{m}g}{e_i}(a)=\sum_{j=1}^{m} 
e_{1}\wedge \cdots \wedge B_{i}(e_j) \wedge 
\cdots \wedge e_{m}
$$
by the definition of $\sqrt{-1}\Theta_{\Lambda ^{m}g}$ 
(see equality (\ref{ref1})). 
A straightforward computation yields 
\begin{align}\label{ref4}
A_{i}&=\big\langle \Cur{\Lambda ^{m}g}{e_i}(a), a \big\rangle_{\Lambda ^{m}g}\\ \notag
&=\sum_{j=1}^{m} \big\langle e_{1}\wedge \cdots \wedge B_{i}(e_j) \wedge 
\cdots \wedge e_{m}, e_{1}\wedge e_2 \wedge \cdots \wedge e_{m} 
\big\rangle_{\Lambda ^{m}g}\\  \notag
&=\sum_{j=1}^{m} \big\langle B_{i}(e_j), e_j \big\rangle_{g}\\  \notag
&=\sum_{j=1}^{m} \Rur{g}{e_i}{e_j}. 
\end{align}
By Royden's lemma (see Lemma \ref{Royden}) and 
the assumption of the holomorphic sectional curvature being semi-positive, 
we can obtain   
\begin{align*}
\sum_{i=1}^{m}A_{i} = \sum_{i,j=1}^{m} \Rur{g}{e_i}{e_j}
=\frac{1}{2}\Big\{
\sum_{k=1}^{m} \Rur{g}{e_k}{e_k}+
\frac{1}{4^m}\sum_{\gamma \in I^{m}} \Rur{g}{\eta_{\gamma}}{\eta_{\gamma}} 
\Big\}\geq 0. 
\end{align*}
Therefore it can be seen that 
$$
A_{i_0}= \Rur{\Lambda^{m}g}{e_{i_0}}{a} \geq 0$$ 
for some $i_{0}\in \{1,2,\dots, m\}$. 
By the choice of the orthonormal basis, 
the vector $d\phi_{*}(e_{i_{0}})$ is a non-zero vector in $T_{Y, \phi(p)}$. 
This completes the proof. 
\end{proof}

In the rest of this step, 
we show that $K_{Y}$ is not an ample line bundle 
by using Claim \ref{claim-1} and Theorem \ref{AG-thm}. 
Since $(\phi^{*} K_Y, \Lambda^m h^\vee)$ 
is a subbundle of $(\Lambda^m \Omega_X, \Lambda^m g^\vee)$ 
as hermitian vector bundles, 
we have
$$
\Cur{\Lambda^{m} h^\vee}{v}|c|^2_{\Lambda^{m} h^\vee}
=\Rur{\Lambda^{m} h^\vee}{v}{c} \leq 
\Rur{\Lambda^{m} g^\vee}{v}{b} 
$$
for any vector $v \in T_X$ and a non-zero vector $c \in \phi^{*}K_Y$, 
where $b:=\Lambda^{m} d\phi^{*}(c) \in \Lambda^{m} \Omega_X$. 
For an arbitrary point $p \in X$,  
we can find a tangent vector $v \in T_{X,p}$ 
such that 
$$\text{
$d\phi_{*}(v) \not = 0$ in $T_{Y, \phi(p)}$ \quad and \quad 
$\Cur{\Lambda^{m} h^\vee}{v} \leq 0$
} 
$$
by Claim \ref{claim-1} and $|c|_{\Lambda^{m} h^\vee} \not =0$. 
 
On the other hand, 
if $K_Y$ is assumed to be an ample line bundle, 
there is a smooth hermitian metric $H$ on $K_Y$ with 
the (strictly) positive curvature $\sqrt{-1}\Theta_{H}>0$. 
Then it can be shown that the line bundle $\phi^{*}K_Y$ is RC negative. 
Indeed, 
for the hermitian metric on $\phi^{*}K_Y$
defined by 
$$
(\Lambda^{m} h^\vee)^2 \cdot (\phi^{*}H)^{-1}, 
$$
the curvature 
$\sqrt{-1}\Theta_{(\Lambda^{m} h^\vee)^2 \cdot (\phi^{*}H)^{-1}}$ 
satisfies that 
\begin{align*}
\sqrt{-1}\Theta_{(\Lambda^{m} h^\vee)^2 \cdot (\phi^{*}H)^{-1}} (v, \bar v)
&=2\Cur{(\Lambda^{m} h^\vee)}{v} -\phi^{*}\Cur{H}{v} \\
&\leq -\Cur{H}{d\phi_*(v)}\\
&<0 
\end{align*}
for the tangent vector $v \in T_{X,p}$ obtained in Claim \ref{claim-1}. 
The last inequality follows from $d\phi_{*}(v) \not = 0$ 
and $\sqrt{-1}\Theta_H>0$. 

The dual bundle $\phi^{*}K_Y^{\vee}$ is RC positive, 
and thus $\phi^{*}K_Y$ is not pseudo-effective by Theorem \ref{AG-thm}. 
It contradicts to the assumption that $K_Y$ is an ample line bundle. 
\end{step}

\begin{step}[The proof of Theorem \ref{main-thm}]\label{step2}
In this step, we prove the following statement by modifying the idea in Step \ref{step1}, 
which is a generalization of  Theorem \ref{main-thm}. 

\begin{theo}\label{main-thm2}
Let $X$ be a compact K\"ahler manifold with semi-positive holomorphic sectional curvature, 
and let $\phi: X \dashrightarrow Y$ be a dominant meromorphic map from $X$ 
to a projective variety $Y$ with at most canonical singularities 
Then $K_{Y}$ is not a  big line bundle. 
\end{theo}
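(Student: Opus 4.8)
I would argue by contradiction, assuming $K_Y$ to be big. The first reduction is to the case where $Y$ is smooth: fixing a resolution $\mu\colon\widetilde Y\to Y$ and using that $Y$ has at most canonical singularities, one has $K_{\widetilde Y}=\mu^{*}K_Y\otimes\mathcal{O}_{\widetilde Y}(E)$ with $E\ge 0$ effective, and since $\mu^{*}K_Y$ is big (pull-back of a big line bundle under a birational morphism) so is $K_{\widetilde Y}$; as $\phi$ lifts to a dominant meromorphic map $X\dashrightarrow\widetilde Y$, it is enough to reach a contradiction when $Y$ is a smooth projective variety with $K_Y$ big (this also settles statement $(1)$ of Theorem \ref{main-thm}).

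Next I would set up the bundle map underlying the curvature argument. Since $\phi$ is meromorphic and $X$ is smooth, the indeterminacy locus of $\phi$ has codimension $\ge 2$, so on its complement $\phi$ is a morphism and the differential gives $\phi^{*}\Omega_Y\to\Omega_X$. Taking $m$-th exterior powers with $m:=\dim Y$, so that $\Lambda^{m}\Omega_Y=K_Y$, absorbing the divisorial part of the degeneracy locus of $d\phi$ into an effective twist, and extending across the (codimension $\ge 2$) indeterminacy locus, I would obtain a line bundle $\mathcal{L}$ on $X$ with an injective sheaf morphism $t\colon\mathcal{L}\hookrightarrow\Lambda^{m}\Omega_X$, where over the locus on which $\phi$ is a submersion $\mathcal{L}$ equals $\phi^{*}K_Y$ up to an effective twist. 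Let $V\subset X$ be the proper subvariety formed by the indeterminacy locus of $\phi$ together with the locus where $\phi$ fails to be a submersion onto $Y$; then the image of $t$ is a non-zero line of $\Lambda^{m}\Omega_{X,p}$ for every $p\in X\setminus V$. Equipping $\Lambda^{m}\Omega_X$ with $\Lambda^{m}g^{\vee}$ for a K\"ahler metric $g$ on $X$ with semi-positive holomorphic sectional curvature, Claim \ref{claim-1} — whose proof, by Remark \ref{smooth-rem}, uses only smoothness of $\phi$ at the point considered — gives, for every $p\in X\setminus V$ and every non-zero $b$ in the image of $t$ at $p$, a vector $v\in T_{X,p}$ with $d\phi_{*}(v)\ne 0$ and $\Rur{\Lambda^{m}g^{\vee}}{v}{b}\le 0$.

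Then I would bring in bigness of $K_Y$ through Kodaira's lemma: there exist an ample line bundle $A$ on $Y$ and an integer $\ell\ge 1$ with $\ell K_Y\otimes A^{\vee}$ effective. Pulling back a defining section and extending yields a non-zero section of $F:=\mathcal{L}^{\otimes\ell}\otimes\mathcal{A}^{\vee}$, where $\mathcal{A}$ is the line bundle on $X$ extending $\phi^{*}A$; enlarging $\ell$ and exploiting bigness, one can moreover arrange this section to vanish along $V$, so that $H^{0}(X,F\otimes\mathcal{I}_{V})\ne 0$. On the other hand, composing $t$ gives an injection $F\hookrightarrow\mathcal{E}:=S^{\ell}(\Lambda^{m}\Omega_X)\otimes\mathcal{A}^{\vee}$, which I would equip with $S^{\ell}(\Lambda^{m}g^{\vee})\otimes(\phi^{*}H_A)^{-1}$ for a positively curved metric $H_A$ on $A$ (extended over $V$ arbitrarily). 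For the vector $v$ supplied by Claim \ref{claim-1}, the $S^{\ell}(\Lambda^{m}g^{\vee})$-factor contributes $\ell$ times a non-positive quantity by equality (\ref{ref2}), while the $(\phi^{*}H_A)^{-1}$-factor contributes $-\Cur{H_A}{d\phi_{*}(v)}<0$ since $d\phi_{*}(v)\ne 0$; hence the curvature of $\mathcal{E}$ is \emph{strictly} negative in the direction $v$ along the image of $F$, at every point of $X\setminus V$. Theorem \ref{vanish-prop} then forces $H^{0}(X,F\otimes\mathcal{I}_{V})=0$, a contradiction; hence $K_Y$ is not a big line bundle.

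The conceptual engine is exactly the one from Step \ref{step1}: Royden's lemma provides a non-vertical direction along which $\Lambda^{m}g^{\vee}$ has non-positive curvature, and tensoring with the pull-back of an ample line bundle upgrades this to strict negativity. The real work lies in the sheaf-theoretic bookkeeping needed to carry this out for a \emph{meromorphic} map into a merely smooth $Y$: one has to choose $\mathcal{L}$, the morphism $t$, the exponent $\ell$, and the subvariety $V$ so that, at the same time, Claim \ref{claim-1} applies at every point of $X\setminus V$, the image of $t$ is non-zero there (which is where the canonical-singularity hypothesis on $Y$, equivalently $K_{\widetilde Y}\ge\mu^{*}K_Y$, and the tracking of the divisorial degeneracy divisor are used), and the section coming from bigness genuinely vanishes along all of $V$ — the indeterminacy locus included — so that it represents a class in $H^{0}(X,F\otimes\mathcal{I}_{V})$. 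Reconciling this last requirement with keeping the curvature estimate strictly negative on $X\setminus V$ is, I expect, the main obstacle.
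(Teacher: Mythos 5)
Your strategy is the same as the paper's: convert the canonical-singularity hypothesis into an effective discrepancy divisor, use Royden's lemma via Claim \ref{claim-1} to produce, at each point where $\phi$ is submersive, a non-vertical direction of non-positive curvature for $\Lambda^{m}g^{\vee}$, upgrade this to strict negativity by twisting with the pull-back of an ample line bundle supplied by Kodaira's lemma, and conclude with the maximum-principle vanishing theorem (Theorem \ref{vanish-prop}) applied to a section vanishing on the bad locus. Your initial reduction to smooth $Y$ is a clean reorganization of the paper's handling of the divisor $F$, and your inclusion of the non-submersion locus in $V$ (forced by your saturation of $\Lambda^{m}d\phi^{*}$ into a line subbundle $\mathcal{L}$; the paper avoids this because the unsaturated map has zero image wherever $\phi$ fails to be submersive, making the curvature hypothesis vacuous there) is harmless, since by generic smoothness that locus lies over a proper subvariety of $Y$ and can be killed by the chosen section.

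The one step that fails as written is the application of Theorem \ref{vanish-prop} with the metric ``$(\phi^{*}H_{A})^{-1}$ extended over $V$ arbitrarily''. Theorem \ref{vanish-prop} requires a single smooth hermitian metric on all of $X$ whose curvature is RC-negative along $\Image(t)$ at \emph{every} point of $X\setminus V$. But $(\phi^{*}H_{A})^{-1}$ degenerates to zero along the indeterminacy locus $B\subset V$, so no smooth hermitian metric on $X$ can agree with it on the dense set $X\setminus V$; and if you replace it by some other smooth metric near $B$, you lose control of the curvature at points of $X\setminus V$ close to $B$, so the hypothesis of Theorem \ref{vanish-prop} is not verified there. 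The paper's fix is to keep the singular metric: taking $H_{A}$ built from a basis of sections of the very ample $A$, the dual $\phi^{*}H_{A}^{\vee}$ is locally a smooth semi-positive weight vanishing exactly on $B$, so the norm of $\widetilde{s}$ is still a globally smooth function on $X$ vanishing on $B$; its maximizer therefore lies in $X\setminus V$, where everything is smooth, and the local computation in the proof of Theorem \ref{vanish-prop} goes through unchanged. With that modification your argument closes up and coincides with Step \ref{step2} of the paper.
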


\begin{proof}[Proof of Theorem \ref{main-thm2}]

Even if $\phi$ is a morphism, 
the morphism (\ref{eq-surj}) is not injective as a bundle morphism 
(since the rank of the linear map defined on fibers may not be constant), 
but it induces the injective sheaf morphism 
between locally free sheaves $\phi^{*} K_Y$ and $\Lambda^m \Omega_X$. 
From now on, we interchangeably use the words 
\lq \lq vector bundles" and \lq \lq locally free sheaves",  
and we use the same notation for the induced sheaf morphism. 
The main differences from Step \ref{step1} are 
that we have to treat the indeterminacy locus and 
that we can not obtain 
the induced metric on $\phi^{*} K_Y$ 
since the morphism (\ref{eq-surj}) is not a bundle morphism. 
To overcome these difficulties, 
we apply Theorem \ref{vanish-prop} instead of Theorem \ref{AG-thm}.

We first take a resolution $\tau: \overline{X} \to X$ of 
the indeterminacy locus $B$  of $\phi$ 
such that it passes through a resolution $\mu : \overline{Y} \to Y$  of singularities of $Y$.  
%We may assume that $\mu$ (resp. $\tau $) is an isomorphism over 
%the non-singular locus $Y_{\reg}:=Y\setminus Y_{\sing}$ 
%(resp. on $\bar \phi^{-1}(Y_{\reg}) \cap \tau^{-1}(X \setminus B)$). 
The morphisms $\varphi$ and $\bar \phi$ 
are defined by the following diagram\,$:$
\begin{equation*}
\xymatrix@C=40pt@R=30pt{
\overline{X} \ar[d]_\tau \ar[rd]^{\bar{\phi}\ \ } \ar[r]^{\varphi}  & \overline{Y}  \ar[d]^{\mu} \\ 
X \ar@{-->}[r]^{\phi \ \ \ }  &  Y.\\   
}
\end{equation*}

For a contradiction, we assume that $K_Y$ is a big line bundle. 
It can be seen that there exist a very ample line bundle $A$ on $Y$ 
and an effective Cartier divisor $E$ on $Y$
such that $K_Y^{\otimes m_{0}}=A \otimes E$ holds for some $m_{0}>0$
by Kodaira lemma. 
We define the \lq \lq pull-backs" of 
the Cartier divisors $K_Y^{\otimes m_{0}}$ and $A$ 
by  
$$
\phi^{*}K_Y^{\otimes m_{0}}
:= \tau_{*} \bar \phi^{*}(K_Y^{\otimes m_{0}}) \quad \text{and} \quad 
\phi^{*} A=\tau_{*} \bar \phi^{*}A. 
$$

Let $\{t_{i}\}_{i \in I}$ be a basis of $H^{0}(Y, A)$. 
The sections $\{t_{i}\}_{i \in I}$ determine  
the smooth hermitian metric $H$ on $A$. 
Indeed, the hermitian metric $H$ on $A$ can be defined to be 
$$
|e|^2_H:=
\frac{|e|^2} {\sum_{i\in I} |t_i|^2} 
$$
for every vector $e \in A$. 
It follows that 
the Chern curvature $\sqrt{-1}\Theta_{H}$ is a positive $(1,1)$-form 
on the non-singular locus $Y_{\reg}:=Y \setminus Y_{\sing}$ of $Y$ 
since $A$ is a very ample line bundle on $Y$. 

Similarly, the pull-backs 
$\{\phi^{*}t_{i}\}_{i \in I}$ of the sections $\{t_{i}\}_{i \in I} $ under $\phi$, 
which are sections of $\phi^* A$, 
also determine the \lq \lq singular" hermitian metric on $\phi^{*}A$, 
which we denote by the notation $\phi^* H$ 
(see \cite{Dem} for singular hermitian metrics). 
The section $\phi^{*}t_{i}$ obtained from the pull-back of $t_i$ 
is identically zero on the indeterminacy locus $B$  
(otherwise it contradicts to the fact that 
$B$ is the indeterminacy locus and $A$ is very ample). 
Hence we can see that $\phi^* H$ has analytic singularities along the indeterminacy locus $B$.

We consider a point $p \in X$ such that 
$\phi(p) \in Y_{\reg}$ and $\phi$ is a morphism at $p$. 
%For a positive integer $\ell$, 
%we have the injective sheaf morphism 
%$$
%S^{\ell} (\Lambda^m d\phi^*) : 
%\phi^{*} K_{Y}^{\otimes \ell}
%\xrightarrow{\quad   \quad }
%S^{\ell}(\Lambda^m \Omega_{X}) 
%$$
%on a neighborhood of $p$. 
%It can be seen that $\phi$, $\bar \phi$, and $\varphi$ are identified 
%by $\tau $ and $\mu$ on a neighborhood of $p$.
It can be seen that 
$\phi^{*}H$ is smooth at $p$ and 
that $\sqrt{-1}\Theta_{\phi^*  H}=\phi^*  \sqrt{-1}\Theta_{H}$ holds at $p$.  
Therefore, by Claim \ref{claim-1} (see also Remark \ref{smooth-rem}) and Step \ref{step1}, 
we can obtain the following claim\,$:$

%Now we have the injective sheaf morphism 
%\begin{equation}\label{eq-surj2}
%\phi^{*} K_Y^{\otimes \ell-1}=\phi^{*} K_Y^{\otimes \ell} \otimes \phi^{*}K_{Y}^{\vee}
%\xrightarrow{\quad S^{\ell}(\Lambda^m d\phi^*) \otimes id_{\phi^{*} K_{Y}^{\vee}} \quad } 
%S^{\ell}(\Lambda^m \Omega_X) \otimes \phi^{*} K_{Y}^{\vee}
%\end{equation}
%for every positive integer $\ell$. 
%The vector bundle $S^{\ell}(\Lambda^m \Omega_X) \otimes K_{Y}^{\vee}$ 
%is equipped with the hermitian metric $S^{\ell}(\Lambda^m g^\vee) \otimes \phi^{*}H^{\vee}$, 
%where $H$ is a smooth hermitian metric on $K_Y$ with 
%the positive curvature $\sqrt{-1}\Theta_{H}>0$. 

\begin{claim}\label{claim-2}
We consider a point $p \in X$ such that 
$\phi(p) \in Y_{\reg}$ and $\phi$ is a morphism at $p$. 
Let $\ell$ be a positive integer. 
Then, for any non-zero vector 
$$
b \in \Image \big( S^{\ell} (\Lambda^m d\phi^*) : 
\phi^{*} K_{Y,p}^{\otimes \ell}
\xrightarrow{\quad S^{\ell}(\Lambda^m d\phi^*)  \quad }
S^{\ell}(\Lambda^m \Omega_{X,p}) 
\big) \subset S^{\ell}(\Lambda^m \Omega_{X,p}) \text{ at } p, $$ 
there exists a tangent vector $v \in T_{X,p}$ 
with the following property\,$:$
$$\text{
$\bullet$ $d\phi_{*}(v) \not = 0$ in $T_{Y, \phi(p)}$. \quad \quad 
$\bullet$  $\Rur{S^{\ell}(\Lambda^m g^\vee)}{v}{b} \leq 0$. 
} 
$$
\vspace{-0.5cm}\\
Moreover, for such a point $p$ and a non-zero vector 
$$
b \in \Image \big( S^{\ell} (\Lambda^m d\phi^*)\otimes id : 
\phi^{*} K_{Y,p}^{\otimes \ell} \otimes \phi^* A_p^{\vee}
\xrightarrow{\quad S^{\ell}(\Lambda^m d\phi^*) \otimes id \quad }
S^{\ell}(\Lambda^m \Omega_{X,p}) \otimes \phi^* A_p^{\vee}
\big) \text{ at } p, $$ 
there exists a tangent vector $v \in T_{X,p}$ such that 
$$\Rur{S^{\ell}(\Lambda^m g^\vee) \otimes \phi^* H^\vee}{v}{b} < 0. 
$$
\end{claim}

\begin{proof}[Proof of Claim \ref{claim-2}]
We choose an orthonormal basis $\{e_{i}\}_{i=1}^{n}$ of 
$T_{X, p}$ at $p$ such that $ \{d\phi_{*}(e_{i})\}_{i=1}^{m}$ is also 
an orthonormal basis of $\phi^* T_{Y,p}$. 
Note that the morphism $\phi$ is a smooth morphism at $p$ 
(otherwise there is no non-zero vector in the image). 
Let $V  \subset T_{X,p}$ be the subspace 
$V:=\Spn \langle \{ \{e_i\}_{i=1}^{m} \} \rangle $
spanned by $\{e_i\}_{i=1}^{m}$ and $a^{\odot \ell}$ be 
the vector defined by 
$$
a^{\odot \ell}:=(e_{1} \wedge e_{2} \wedge \cdots \wedge e_{m})^{\odot \ell} 
\in S^{\ell} (\Lambda^{m} V) \subset S^{\ell}(\Lambda^m T_{X,p}). 
$$
For any $i \in \{1,2,\dots, m\}$, we obtain  
\begin{align*}
\Cur{S^{\ell}(\Lambda ^{m}g)}{e_i}(a^{\odot \ell})&=
\sum_{k=1}^{\ell} a \odot \cdots \odot 
\Cur{\Lambda ^{m}g}{e_i}(a) \odot \cdots \odot a \\
&=\ell \Cur{\Lambda ^{m}g}{e_i}(a) \odot a^{\odot \ell-1}  
\end{align*}
from equality  (\ref{ref2}). Hence it can be shown that     
\begin{align}\label{ref5}
\Rur{S^{\ell}(\Lambda^{m}g)}{e_i}{a^{\odot \ell}}
&=\big\langle \Cur{S^{\ell}(\Lambda ^{m}g)}{e_i}(a^{\odot \ell}), 
a^{\odot \ell} \big\rangle_{S^{\ell}(\Lambda ^{m}g)}\\ \notag
&=\ell \big\langle 
\Cur{\Lambda ^{m}g}{e_i}(a) \odot a^{\odot \ell-1}, 
a^{\odot \ell} \big\rangle_{S^{\ell}(\Lambda ^{m}g)}\\ \notag
&=\ell \big\langle \Cur{\Lambda ^{m}g}{e_i}(a), 
a \big\rangle_{\Lambda ^{m}g}. 
\end{align}
By Royden's lemma (see Lemma \ref{Royden}) and the proof of Claim \ref{claim-1}, 
we can easily check that the right hand side is non-negative  
for some $i_0 \in \{1,2,\dots, m\} $. 
This leads to the first conclusion. 

We will check the latter conclusion. 
The vector $b$ in the claim can be written as 
$b=b_1 ^{\odot \ell} \otimes b_2$, 
where $b_{1}$ is a vector in the image of 
$$
\phi^{*} K_{{Y,p}}
\xrightarrow{\quad \Lambda^md  \phi^* \quad} 
\Lambda^m \Omega_{X,p}, 
$$
and $b_{2}$ is a vector in $\phi^{*} A_p^{\vee}$. 
Then, for any tangent vector $v \in T_{X,p}$, 
we obtain 
\begin{align*}
&\Rur{S^{\ell}(\Lambda^m g^\vee) \otimes \phi^* H^\vee}{v}{b}\\
=&\Rur{S^{\ell}(\Lambda^m g^\vee)}{v}{b_1^{\odot \ell}}|b_2|^2_{\phi^* H^\vee}+
|b_1^{\odot \ell}|^2_{S^{\ell}(\Lambda^m g^\vee)}
\Rur{\phi^* H^\vee}{v}{b_2}\\
=&\Rur{S^{\ell}(\Lambda^m g^\vee)}{v}{b_1^{\odot \ell}}|b_2|^2_{\phi^* H^\vee}
+
|b_1^{\odot \ell}|^2_{S^{\ell}(\Lambda^m g^\vee)}
|b_2|^2_{\phi^* H^\vee}
\sqrt{-1}\Theta_{\phi^* H^\vee}(v, \bar v)
\end{align*}
from (\ref{ref3}) and (\ref{ref5}). 
When the tangent vector $v$ satisfies the first conclusion, 
we can see that 
$$
\Rur{S^{\ell}(\Lambda^m g^\vee)}{v}{b_1^{\odot \ell}} 
\leq 0 \quad \text{ and } \quad 
\Cur{\phi^{*}H^\vee}{v}=-\Cur{H}{d\phi_{*}(v)}<0
$$
from $d\phi_{*}(v) \not =0$ and $\sqrt{-1}\Theta_H>0$.
This completes the proof.
\end{proof}

%For a (non-empty) Zariski open set $Y_0$ in $Y$ over which $\phi$ is a smooth morphism, 
%we consider the smooth morphism 
%$\phi_0:=\phi|_{X_{0}}$ restricted to $X_{0}:=\phi^{-1}(Y_{0})$. 
%We have the following diagram\,$:$
%\begin{equation*}%\ar@{^{(}->}
%\xymatrix{
%X \ar[d]^\phi& \ar@{_{(}->}[l] X_0:=\phi(Y_0) \ar[d]^{\phi_{0}:=\phi|_{X_0}} \\ 
%Y & \ar@{_{(}->}[l] Y_0.\\   
%}
%\end{equation*}

In the rest of this step, 
we will finish the proof of Theorem \ref{main-thm2}
by applying  the above claim and Theorem \ref{vanish-prop}. 
For a sufficiently divisible integer $\ell=k m_0$, 
we consider the formula 
$$
\mu^{*} K_{Y}^{\otimes \ell} = K_{\overline{Y}}^{\otimes \ell}\otimes F^{\otimes -\ell}. 
$$ 
Here $F$ is the effective divisor 
since $Y$ has at most canonical singularities. 
Then we obtain the injective sheaf morphisms 
\begin{align}\label{eq-surj3}
\bar \phi^{*} A^{\otimes k-1} 
&\xrightarrow {\quad \otimes t_{0} \quad }
\bar \phi^{*} A^{\otimes k-1} \otimes \bar \phi^{*}E^{\otimes k}
=\bar \phi^{*} K_Y^{\otimes \ell} \otimes  \bar \phi^{*} A^{\vee}
= \varphi^{*} (K_{\overline{Y}}^{\otimes \ell}\otimes F^{\otimes -\ell}) 
\otimes \bar \phi^{*}A^{\vee}\\
&\xrightarrow {\quad \otimes t \quad } \notag
\varphi^{*} K_{\overline{Y}}^{\otimes \ell} \otimes \bar \phi^{*} A^{\vee}
\xrightarrow {\quad S^{\ell}(\Lambda^m d \varphi^{*}) \otimes id \quad }
S^{\ell}(\Lambda^m \Omega_{\overline{X}}) \otimes \bar \phi^{*} A^{\vee}, 
\end{align}
where $\otimes t$ (resp. $t_0$)
is the multiplication map defined by 
the natural section $t$ (resp. $t_0$) of the effective divisor 
$\varphi^* F^{\otimes \ell}=\ell \varphi^*F$ 
(resp. $\bar \phi^{*} E^{\otimes k}=k\bar \phi^{*}E$). 
Further we have 
\begin{align}\label{push}
\tau_{*}\big( 
S^{\ell}(\Lambda^m \Omega_{\overline{X}}) 
\otimes \bar \phi^{*} A^{\vee} \big)
 = S^{\ell}(\Lambda^m \Omega_{X}) \otimes \phi^{*} A^{\vee} \text { and }
\tau_* (\bar\phi^{*} A^{\otimes k-1}) = \phi^{*} A^{\otimes k-1}
\end{align}
by the definition. 
Therefore we obtain the injective sheaf morphism
\begin{align}\label{inj}
\phi^{*} A^{\otimes k-1} \longrightarrow
 S^{\ell}(\Lambda^m \Omega_{X}) \otimes \phi^{*} A^{\vee}. 
\end{align}
By taking the pull-back under $\bar\phi $,  
chasing the injective morphisms induced by (\ref{eq-surj3}), 
and using equality (\ref{push}), 
we obtain the following diagram\,$:$
\begin{equation*}
\xymatrix@C=40pt@R=30pt{
H^{0}(Y,A^{\otimes k -1})  \ar[r]^{\phi^*}  \ar@/_20pt/[dr]^{\bar \phi^*}
& H^{0}(X, \phi^{*} A^{\otimes k-1}) 
\ar[r]^{(\ref{inj})\ \ \ \ \ \ \ \ \ } \ar[d]_{\cong} 
& H^{0}(X, S^{\ell}(\Lambda^m \Omega_{X}) \otimes \phi^{*} A^{\vee}) \ar[d]_{\cong} \\ 
& H^{0}(\overline{X}, \bar \phi^{*} A^{\otimes k-1} ) 
\ar[r]^{(\ref{eq-surj3})\ \ \ \ \ \ \ \ \ }
& H^{0}(\overline{X}, S^{\ell}(\Lambda^m \Omega_{\overline{X}}) \otimes \bar \phi^{*} A^{\vee}).\\   
}
\end{equation*}

By taking a sufficiently large integer $k$, 
we can choose a non-zero section $s$ in $H^{0}(Y, A^{\otimes k-1})$ 
such that $s$ is identically zero on the singular locus $Y_{\sing}$, 
by ampleness of $A$. 
We consider the non-zero section 
$$
\widetilde s \in H^{0}(X, S^{\ell}(\Lambda^m \Omega_{X}) \otimes \phi^{*} A^{\vee})  
$$ 
obtained from the above injective morphisms.  
The metric $\phi^{*}H$ is a singular hermitian metric, 
but it has analytic singularities, 
and thus $\phi^{*}H^\vee$ can be seen  locally as a smooth function 
(which is identically zero on $B$). 
Therefore 
the point-wise norm  
$|\widetilde {s}|_{S^{\ell}(\Lambda^m g^\vee ) 
\otimes \phi^{*}H^{\vee}} $
of $\widetilde s$
is a smooth function on $X$. 
Thus we can take a maximizer $p_0 \in X$ of this norm, 
that is, $p_0 \in X$ satisfies that 
$$
\max_X |\widetilde {s}|_{S^{\ell}(\Lambda^m g^\vee ) 
\otimes \phi^{*}H^{\vee}} = 
|\widetilde {s}|_{S^{\ell}(\Lambda^m g^\vee)  
\otimes \phi^{*}H^{\vee}}(p_0). 
$$
It can be seen that that $\widetilde{s}$ is identically zero on $B$ 
since $\widetilde{s}$ is obtained via the pull-back under $\phi$. 
In particular, the point $p_0$ is outside $B$. 
%Therefore $\phi$ is a morphism at $p_0$, 
%and thus $\widetilde{s}$ is just the pull-back under the morphism $\phi$
%on a neighborhood of $p_0$. 
Further it follows that 
$\widetilde{s}$ is identically zero over $Y_{\sing}$ 
by the choice of $s$. 
Therefore we can easily see that 
the same argument as in Theorem \ref{vanish-prop} works. 
(The only difference is that $\phi^{*}H$ is a singular hermitian metric, 
but it is smooth on a neighborhood of $p_{0}$.) 
Indeed, by applying equality (\ref{max}) 
to the non-zero vector $b:= \widetilde{s}(p_{0})$, 
we can conclude that $\widetilde{s}$ is identically zero 
thanks to Claim \ref{claim-2}. 
This is a contradiction. 
\end{proof}

%This technique will be used in Step \ref{step4}. 
\end{step}

Compared to Step \ref{step1}, 
the difficulty in Step \ref{step2} is to treat  
the singular locus $Y_{\sing}$ and the indeterminacy locus $B$. 
The key point in the proof is that the indeterminacy locus $B$ is 
automatically killed and the singular locus $Y_{\sing}$ is also 
killed by the zero locus of the section $\widetilde{s}$. 
\end{proof}

As we mentioned in Remark \ref{main-rem}, 
by using the above method, 
Conjecture \ref{image-conj} can be solved 
if $Y$ admits a good minimal model $Y \dashrightarrow Y_{\min}$. 
The main idea is to kill 
the singular locus $Z_{\sing}$ and 
the non-smooth locus of a morphism $f:Y_{\min} \to Z$ 
by the zero locus of the section $\widetilde{s}$, 
where $Z$ is the canonical model of $Y_{\min}$.
For this purpose, the following lemma, 
which can be seen as a generalization of Claim \ref{claim-1} and Claim \ref{claim-2}, 
plays an important role, 
but we omit the detail. 

\begin{lemm}\label{key}
Let $X$, $Y$, and $Z$ be complex manifolds. 
For morphisms $\phi: X \to Y$ and $f:Y \to Z$, 
we assume that $\psi:=f \circ \phi: X \to Z$ is a smooth morphism at $p \in X$. 
Further let $g$ be a K\"ahler metric of $X$ with 
the semi-positive holomorphic sectional curvature $H_g$. 
We put $m:=\dim Z$. 
We consider the induced metric $\Lambda^m g^\vee$ on $\Lambda^m \Omega_{X}$. 
%and the injective linear map
%$$\Lambda^md\phi^*: 
%\phi^{*} K_{Y,p} 
%\xrightarrow{\quad \Lambda^md\phi^* \quad} 
%\Lambda^m \Omega_{X,p} \text{ at  $p \in X$. }
%$$
Then, for any non-zero vector vector  
$$
b  \in \Image (\Lambda^md\phi^*: 
\phi^{*} K_{Y,p} 
\xrightarrow{\quad \Lambda^md\phi^* \quad} 
\Lambda^m \Omega_{X,p}) \subset \Lambda^m \Omega_{X,p} \text{ at } p,  
$$
there exists a tangent vector $v \in T_{X,p}$ with the following properties\,$:$
$$\text{
$\bullet$ $d \psi_{*}(v) \not = 0$ in $T_{Z, \psi(p)}$. \quad \quad 
$\bullet$  $\Rur{\Lambda^m g^\vee}{v}{b} \leq 0$. 
} 
$$
\\
Moreover let $(A,H)$ be a smooth hermitian metric on $Z$ with  the positive curvature $\sqrt{-1}\Theta_H>0$. 
Then, 
for any non-zero vector 
\begin{align*}
b \in \Image \big( S^{\ell}( \Lambda^m d {\phi}^{*}) \otimes id : 
{\phi}^{*} (K_{Y,p}^{\otimes \ell}) \otimes\psi^{*}A_p^\vee
\xrightarrow {\quad S^{\ell}( \Lambda^m d {\phi}^{*}) \otimes id \quad }
S^{\ell}(\Lambda^m \Omega_{X,p}) \otimes \psi^{*}A_p^\vee
\big),  
\end{align*}
there exists a tangent vector $v \in T_{X,p}$ such that 
$$\Rur{S^{\ell}(\Lambda^m g^\vee) \otimes \psi^* H^\vee}{v}{b} < 0. 
$$
\end{lemm}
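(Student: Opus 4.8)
The plan is to repeat the argument of Claims~\ref{claim-1} and~\ref{claim-2} once more, but anchored on the composite $\psi=f\circ\phi$ and with the \emph{subspace} form of Yang's inequality (Lemma~\ref{Yang-ineq}) replacing Royden's lemma, so that the tangent vector produced is automatically not killed by $d\psi_*$. First I would reduce the second conclusion to the first: if $v\in T_{X,p}$ satisfies $d\psi_*(v)\neq 0$ and $\Rur{\Lambda^m g^\vee}{v}{b_1}\le 0$ for $b_1$ the $\Lambda^m$-component of $b=b_1^{\odot\ell}\otimes b_2$, then expanding the curvature of the tensor metric by~(\ref{ref3}) and of the symmetric power by~(\ref{ref2}), exactly as in~(\ref{ref5}), gives
\begin{align*}
\Rur{S^{\ell}(\Lambda^m g^\vee)\otimes\psi^*H^\vee}{v}{b}
=\ell\,|b_1|_{\Lambda^m g^\vee}^{2(\ell-1)}|b_2|_{\psi^*H^\vee}^{2}\,\Rur{\Lambda^m g^\vee}{v}{b_1}
+|b|^{2}\,\Cur{\psi^*H^\vee}{v};
\end{align*}
here the first summand is $\le 0$ by hypothesis and the second equals $-|b|^{2}\,\sqrt{-1}\Theta_H(d\psi_*v,\overline{d\psi_*v})<0$ since $d\psi_*v\neq 0$ and $\sqrt{-1}\Theta_H>0$, so the sum is negative. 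Thus it suffices to establish the first conclusion.

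For the first conclusion I would argue as in Claim~\ref{claim-1}, but over $\psi$ rather than $\phi$. Since $\psi$ is smooth at $p$, the kernel of $d\psi_{*,p}\colon T_{X,p}\to T_{Z,\psi(p)}$ has codimension $m$; let $V:=(\Ker d\psi_{*,p})^{\perp}$ be its $g$-orthogonal complement, so $d\psi_*|_V\colon V\xrightarrow{\ \sim\ }T_{Z,\psi(p)}$ and in particular $d\psi_*(v)\neq 0$ for every nonzero $v\in V$. Choosing a $g$-orthonormal basis $\{e_i\}_{i=1}^{m}$ of $V$ and extending it by a $g$-orthonormal basis $\{e_i\}_{i=m+1}^{n}$ of $\Ker d\psi_{*,p}$, one checks (just as for the image of $\Lambda^m d\phi^*$ in the proof of Claim~\ref{claim-1}) that the image at $p$ of the morphism in the statement is the conormal line of $\psi$, namely the span of $a^{\vee}:=e_1^{\vee}\wedge\cdots\wedge e_m^{\vee}$, which is the metric dual of $a:=e_1\wedge\cdots\wedge e_m$. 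Since $b$ is then a scalar multiple of $a^{\vee}$ and, by the duality of Chern curvature, $\Rur{\Lambda^m g^\vee}{v}{a^{\vee}}=-\Rur{\Lambda^m g}{v}{a}$, it suffices to produce $v\in V$ with $\Rur{\Lambda^m g}{v}{a}\ge 0$.

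This is where Lemma~\ref{Yang-ineq} is used. Take $x\in V$ to be a unit vector minimizing $H_g$ on $V$ (it exists by compactness of the unit sphere of $V$), so that, in the semi-positive form of Lemma~\ref{Yang-ineq}, $\Rur{g}{x}{w}\ge 0$ for every $w\in V$. Then, using the curvature formula~(\ref{ref1}) for $\Lambda^m g$ exactly as in~(\ref{ref4}),
\begin{align*}
\Rur{\Lambda^m g}{x}{a}=\sum_{j=1}^{m}\big\langle\Cur{g}{x}(e_j),e_j\big\rangle_g=\sum_{j=1}^{m}\Rur{g}{x}{e_j}\ge 0,
\end{align*}
since each $e_j$ lies in $V$. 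Taking $v:=x\in V$ we get $d\psi_*(v)\neq 0$ and $\Rur{\Lambda^m g^\vee}{v}{b}\le 0$, proving the first conclusion; feeding this $v$ (for the vector $b_1$) into the identity of the first paragraph then yields the second.

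I expect the only genuinely delicate point to be the identification of the image of the morphism in the statement with the single conormal line $\langle a^{\vee}\rangle$ --- that is, checking that at a point $p$ where $\psi$ is smooth the map ``$\Lambda^m d\phi^*$'' on $\phi^{*}K_{Y,p}$ really factors through the conormal bundle of $\psi$ (in the intended application to the canonical model this uses the relation $\phi^{*}K_Y^{\otimes m_0}=\psi^{*}A$ and the factorization of the resulting composite through $\psi^{*}K_Z$) --- together with keeping the signs straight when passing between $g$ on $T_X$, its dual $g^{\vee}$ on $\Omega_X$, and their exterior and symmetric powers, so that $\Rur{\Lambda^m g^\vee}{v}{a^{\vee}}=-\Rur{\Lambda^m g}{v}{a}$ holds on the nose. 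Beyond this the proof is a transcription of Claims~\ref{claim-1}--\ref{claim-2}; the one new idea is the use of the subspace version of Yang's inequality to select a single tangent vector $v$ that simultaneously lies off $\Ker d\psi_*$ and makes the $\Lambda^m$-curvature nonnegative, which is exactly what allows the estimate to survive composition with $f$ and the tensoring with the positively curved $\psi^{*}A$.
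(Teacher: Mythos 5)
Your reduction of the second conclusion to the first is fine and matches the paper's computation in Claim \ref{claim-2}. The gap is in the first conclusion, and it is exactly at the point you yourself flag as ``the only genuinely delicate point'': the image of $\phi^{*}K_{Y,p}$ under the morphism in the statement is \emph{not} the conormal line of $\psi$. Since $K_Y=\Lambda^{\dim Y}\Omega_Y$, the map $\Lambda^m d\phi^{*}$ only type-checks with $m=\dim Y$ (the ``$m:=\dim Z$'' in the statement is a slip; in Step \ref{step4} the lemma is applied with $m=\dim\overline{Y}=\dim Y$ to the morphism (\ref{eq-surj3-n})), and its image at $p$ is the line spanned by $e_1^{\vee}\wedge\cdots\wedge e_m^{\vee}$ for an orthonormal basis $\{e_i\}_{i=1}^{m}$ of a complement $V$ of $\Ker(d\phi_{*})$ --- the conormal line of $\phi$, a $\dim Y$-form. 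There is no factorization through $\psi^{*}K_Z$: the relation $K_Y^{\otimes m_0}=f^{*}A$ involves an arbitrary ample $A$ on $Z$, not $K_Z$, and in any case the map in question is induced by $d\phi$ and lands in $\Lambda^{\dim Y}\Omega_X$. Consequently you cannot take $V=(\Ker d\psi_{*})^{\perp}$ of dimension $\dim Z$; you are forced to work with the $\dim Y$-dimensional $V=(\Ker d\phi_{*})^{\perp}$, and then a single minimizer $x$ of $H_g$ on $V$ may well lie in $\Ker(d\psi_{*}|_V)$, which has dimension $\dim Y-\dim Z>0$. Your argument produces $\Rur{\Lambda^m g}{x}{a}\geq 0$ but gives no control on $d\psi_{*}(x)$, so the two bullets of the conclusion are not achieved simultaneously.

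This is precisely the difficulty the paper's proof is designed to overcome, and it is the one genuinely new idea of the lemma relative to Claims \ref{claim-1}--\ref{claim-2}: an iterative ``peeling'' construction. One takes $e_1$ a minimizer of $H_g$ on $V$; if $d\psi_{*}(e_1)=0$ one takes $e_2$ a minimizer on $\Spn\langle\{e_1\}\rangle^{\perp}\cap V$, and so on, for at most $k=\dim Y-\dim Z$ steps. At stage $j$ the sum $\sum_{i=1}^{m}\Rur{g}{e_j}{e_i}$ is still non-negative because the terms with $i\geq j$ are controlled by Lemma \ref{Yang-ineq} applied to the current subspace, while the terms with $i<j$ are controlled by the \emph{earlier} minimizers together with the K\"ahler symmetry $\Rur{g}{e_j}{e_i}=\Rur{g}{e_i}{e_j}$. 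After $k$ steps the vectors $e_1,\dots,e_k$ exhaust $\Ker(d\psi_{*}|_V)$, so the next minimizer $e_{k+1}$ automatically satisfies $d\psi_{*}(e_{k+1})\neq 0$. Your proposal, as written, omits this iteration entirely, so it does not prove the lemma; it proves only the case $\dim Z=\dim Y$ (where $\Ker(d\psi_{*}|_V)=0$), which reduces to Claim \ref{claim-1}.
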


At the end of this paper, 
we  prove Corollary \ref{main-cor}.

\begin{proof}[{Proof of Corollary \ref{main-cor}}]
For a compact K\"ahler manifold 
with semi-positive holomorphic sectional curvature $H_g$, 
we can show that 
$X$ admits a finite \'etale cover by a complex torus or 
$K_{X}$ is not pseudo-effective. 
Indeed, when the holomorphic sectional curvature is identically zero, 
then $X$ admits a finite \'etale cover by a complex torus  
(see \cite[Proposition 2.2]{HLW16}, \cite{Ber66}, \cite{Igu54}). 
When it is not identically zero, 
we consider the scalar curvature $S$ of the K\"ahler metric $g$. 
Then we have 
$$
\int_{X} c_1(K_X) \wedge \omega_g^{n-1} = - \frac{1}{\pi n}\int_{X} S \, \omega_g^n, 
$$
where $\omega_g$ is the K\"ahler form associated to $g$.
The value of $S$ at a point $p \in X$ 
can be written as the integral of 
the holomorphic sectional curvature over 
the projective space $\mathbb{P}(T_{X,p}^\vee)$ (see \cite{Ber66}). 
Therefore the right hand side is negative by the assumption that 
$H_{g}([v]) > 0$ holds for some tangent vector $v$. 
In particular, the canonical bundle $K_X$ is not pseudo-effective.

We consider a compact K\"ahler surface $X$
such that the holomorphic sectional curvature is not identically zero. 
Then, by the above argument, 
we can see that $K_X$ is not pseudo-effective.
It is known that a compact complex surface such that $K_X$ is not pseudo-effective 
is a rational surface, or a minimal surface of class V\hspace{-.1em}I\hspace{-.1em}I, 
or a ruled surface over a curve of genus $\geq 1$ 
by the classification of compact complex surfaces. 
However a minimal surface of class V\hspace{-.1em}I\hspace{-.1em}I is not K\"ahler, 
and thus we can conclude that $X$ is rationally connected or a ruled surface over 
a curve of genus $\geq 1$. 
In the case where $X$ is a ruled surface, 
the genus of the base is less than or equal to one by Theorem \ref{main-thm}. 
Therefore the base of a ruled surface with semi-positive holomorphic sectional curvature 
is an elliptic curve. 
\end{proof}

%\newpage
%%%%%%%%%%%%%%%%%%%%%%


\begin{thebibliography}{n}


%\bibitem[BCHM10]{BCHM}
%C. Birkar, P. Cascini, C. D. Hacon, J. $\mathrm{M^{c}}$Kernan, 
%\textit{Existence of minimal models for varieties of log general type}, 
%J. Amer. Math. Soc. {\bf{23}} (2010), no. 2, 405--468. 

\bibitem[BDPP13]{BDPP}
S. Boucksom, J.-P. Demailly, M. P\u{a}un, T. Peternell, 
\textit{The pseudo-effective cone of a compact K\"ahler manifold 
and varieties of negative Kodaira dimension}, 
J. Algebraic Geom. {\bf{22}} (2013), no. 2, 201--248. 

\bibitem[Ber66]{Ber66}
M. Berger, 
\textit{Sur les vari\'et\'es d'Einstein compactes}, 
Comptes Rendus de la IIIe R\'eunion du Groupement des Math\'ematiciens d'Expression Latine (Namur, 1965) pp. 35--55 Librairie Universitaire, Louvain (1966). 

\bibitem[Cam92]{Cam92}
F. Campana, 
\textit{Connexit\'e rationnelle des vari\'et\'es de Fano}, 
Ann. Sci. \'Ecole Norm. Sup. (4) {\bf{25}} (1992), no. 5, 539--545. 


\bibitem[CG71]{CG71}
J. Cheeger, D. Gromoll, 
\textit{The splitting theorem for manifolds of nonnegative Ricci curvature}, 
J. Differential Geom. {\bf{6}} (1971), 119--128.

\bibitem[CG72]{CG72}
J. Cheeger, D. Gromoll, 
\textit{On the structure of complete manifolds of nonnegative curvature}, 
Ann. of Math., {\bf{96}} (1972), 413--443. 


\bibitem[DT19]{DT19}
S, Diverio, S. Trapani, 
\textit{Quasi-negative holomorphic sectional curvature 
and positivity of the canonical bundle}, 
J. Differential Geom. 111 (2019), no. 2, 303--314. 

\bibitem[Dem]{Dem}
J.-P. Demailly,   
\textit {Analytic methods in algebraic geometry}, 
Surveys of Modern Mathematics, {\bf{1}}, 
International Press, Somerville, Higher Education Press, Beijing, (2012).


\bibitem[DPS96]{DPS96}
J.-P. Demailly, T. Peternell, M. Schneider, 
\textit{Holomorphic line bundles with partially vanishing cohomology}, 
Proceedings of the Hirzebruch 65 Conference on Algebraic Geometry (Ramat Gan, 1993),
165--198, Israel Math. Conf. Proc., {\bf{9}}, Bar-Ilan Univ 1996.


\bibitem[GHS03]{GHS03}
T. Graber, J. Harris, J. Starr, 
\textit{Families of rationally connected varieties}, 
J. Amer. Math. Soc. {\bf{16}} (2003), no. 1, 57--67.


\bibitem[HW15]{HW15}
G. Heier, B. Wong, 
\textit{On projective K\"ahler manifolds of partially positive curvature and rational connectedness}, 
Preprint, arXiv:1509.02149v1.

\bibitem[HLW10]{HLW10}
G. Heier, S. S. Y. Lu, B. Wong, 
\textit{On the canonical line bundle and negative holomorphic sectional curvature}, 
Math. Res. Lett. {\bf{17}} (2010), no. 6, 1101--1110.

\bibitem[HLW16]{HLW16}
G. Heier, S. S. Y. Lu, B. Wong, 
\textit{K\"ahler manifolds of semi-negative holomorphic sectional curvature}, 
J. Differential Geom. {\bf{104}} (2016), no. 3, 419--441.

\bibitem[HLWZ17]{HLWZ}
G. Heier, S. S. Y. Lu, B. Wong, F. Zheng, 
\textit{Reduction of manifolds with semi-negative holomorphic sectional curvature}, 
Preprint, arXiv:1705.00605v1. 

\bibitem[HSW81]{HSW81}
A. Howard, B. Smyth, H. Wu, 
\textit{On compact K\"ahler manifolds of nonnegative bisectional curvature I and I\hspace{-.1em}I}, 
Acta Math. {\bf{147}} (1981), no. 1-2, 51--70


\bibitem[Igu54]{Igu54}
J. Igusa, 
\textit{On the structure of a certain class of Kaehler varieties}, 
Amer. J. Math. {\bf{76}}, (1954), 669--678. 


\bibitem[Nak]{Nak}
N. Nakayama,
\textit{Zariski decomposition and abundance}, 
MSJ Memoirs, {\bf{14}}. Mathematical Society of Japan, Tokyo, (2004).


%\bibitem[Kaw85]{Kaw85}
%Y. Kawamata, 
%\textit{Pluricanonical systems on minimal algebraic varieties}, 
%Invent. Math. {\bf{79}} (1985), no. 3, 567--588. 

%\bibitem[Kaw90]{Kaw90}
%Y. Kawamata, 
%\textit{Abundance theorem for minimal threefolds}, 
%Invent. Math. {\bf{108}} (1992), no. 2, 229--246.


%\bibitem[KaMM92]{KaMM92}
%Y. Kawamata, K. Matsuda, K. Matsuki, 
%\textit{Introduction to the minimal model problem}, 
%Algebraic geometry, Sendai, (1985), 283--360, 
%Adv. Stud. Pure Math., {\bf{10}}, North-Holland, Amsterdam, (1987). 


\bibitem[KoMM92]{KoMM92}
J. Koll\'ar, Y. Miyaoka, S. Mori, 
\textit{Rationally connected varieties}, 
J. Algebraic Geom. {\bf{1}} (1992), no. 3, 429--448.

\bibitem[Mat13]{Mat13}
S. Matsumura, 
\textit{Asymptotic cohomology vanishing 
and a converse to the Andreotti-Grauert theorem on surfaces}, 
Ann. Inst. Fourier (Grenoble) {\bf{63}} (2013), no. 6, 2199--2221.


\bibitem[Mat18a]{Mat18a}
S. Matsumura, 
\textit{On morphisms of compact K\"ahler manifolds with semi-positive holomorphic sectional curvature}, 
Preprint, arXiv:1809.08859v1. 

\bibitem[Mat18b]{Mat18b}
S. Matsumura, 
\textit{On projective manifolds with semi-positive holomorphic sectional curvature}, 
Preprint, arXiv:1811.04182v1. 

\bibitem[Mok88]{Mok88}
N. Mok,
\textit{The uniformization theorem for compact K\"ahler manifolds of nonnegative holomorphic bisectional curvature}, 
J. Differential Geom. {\bf{27}} (1988), no. 2, 179--214.


\bibitem[Ott12]{Ott12}
J. Ottem, 
\textit{Ample subvarieties and $q$-ample divisors}, 
Adv. Math. {\bf{229}} (2012), no. 5, 2868--2887.

\bibitem[Roy80]{Roy80}
H. L. Royden,
\textit{The Ahlfors-Schwarz lemma in several complex variables}, 
Comment. Math. Helv. {\bf{55}} (1980), no. 4, 547--558.

\bibitem[Tot13]{Tot13}
B. Totaro, 
\textit{Line bundles with partially vanishing cohomology}, 
J. Eur. Math. Soc. (JEMS) 15 (2013), no. 3, 731--754.

\bibitem[TY15]{TY15}
V. Tosatti, X. Yang, 
\textit{An extension of a theorem of Wu-Yau}, 
J. Differential Geom. {\bf{107}} (2017), no. 3, 573--579.



\bibitem[Yan16]{Yan16}
X. Yang, 
\textit{Hermitian manifolds with semi-positive holomorphic sectional curvature}, 
Math. Res. Lett. {\bf{23}} (2016), no. 3, 939--952. 

\bibitem[Yan17]{Yan17c}
X. Yang, 
\textit{Big vector bundles and complex manifolds with semi-positive tangent bundles}, 
Math. Ann. {\bf{367}} (2017), no. 1-2, 251--282. 

\bibitem[Yan18a]{Yan17}
X. Yang, 
\textit{RC-positivity, rational connectedness and Yau's conjecture}, 
Camb. J. Math. {\bf{6}} (2018), no. 2, 183--212.

\bibitem[Yan18b]{Yan18b}
X. Yang, 
\textit{RC-positive metrics on rationally connected manifolds}, 
preprint, available at arXiv:1807.03510v2. 



\bibitem[Yan18c]{Yan18c}
X. Yang, 
\textit{RC-positivity, vanishing theorems and rigidity of holomorphic maps}, 
preprint, available at arXiv:1807.02601v2. 


\bibitem[Yan19]{Yan17b}
X. Yang, 
\textit{A partial converse to the Andreotti-Grauert theorem}, 
 Compos. Math. {\bf{155}} (2019), no. 1, 89--99.



%\bibitem[Yau78a]{Yau78a}
%S.-T. Yau, 
%\textit{A general Schwarz lemma for K\"ahler manifolds}, 
%Amer. J. Math. {\bf{100}} (1978), no. 1, 197--203. 

%\bibitem[Yau78b]{Yau78b}
%S.-T. Yau, 
%\textit{On the Ricci curvature of a compact K\"ahler manifold 
%and the complex Monge-Amp\`ere equation}, 
%I. Comm. Pure Appl. Math. {\bf{31}} (1978), no. 3, 339--411.

\bibitem[Yau82]{Yau82}
S.-T. Yau, 
\textit{Problem section}, 
Seminar on Differential Geometry, 669--706, 
Ann. of Math. Stud., {\bf{102}}, Princeton Univ. Press, Princeton, N.J, (1982).

%\bibitem[WWY12]{WWY12}
%P.-M. W, D. Wu, S.-T. Yau, 
%\textit{Picard number, holomorphic sectional curvature, and ampleness}, 
%Proc. Amer. Math. Soc. 140 (2012), no. 2, 621--626.

%\bibitem[WY09]{WY09}
%D. Wu, S.-T. Yau, 
%\textit{A degenerate Monge-Amp\`ere equation and the boundary classes of K\"ahler cones}, 
%Math. Res. Lett. 16 (2009), no. 2, 365--374. 


\bibitem[WY16]{WY16}
D. Wu, S.-T. Yau, 
\textit{Negative holomorphic curvature and positive canonical bundle}, 
Invent. Math. {\bf{204}} (2016), no. 2, 595--604.


 



\end{thebibliography}
\end{document}